\newcommand\bN{\mathbb N}
\newcommand\bZ{\mathbb Z}
\crefname{section}{Section}{Sections}
\crefname{subsection}{\S}{\S\S}
\theoremstyle{plain}
\newtheorem{lemma}{Lemma}[section]
\newtheorem{theorem}[lemma]{Theorem}
\newtheorem{proposition}[lemma]{Proposition}
\let\xx@thm\@thm
\theoremstyle{nonumberplain}
\newtheorem{theoremN}{Theorem}
\newtheorem{qf'bis}{\Cref{le.quot_filt'} bis}
\theoremstyle{plain}
\newtheorem{definition}[lemma]{Definition}
\newtheorem{example}[lemma]{Example}
\crefname{definition}{definition}{definitions}
\crefname{ex}{example}{examples}
\crefname{remark}{remark}{remarks}
\crefname{convention}{convention}{conventions}
\crefname{lemma}{lemma}{lemmas}
\crefname{proposition}{proposition}{propositions}
\crefname{corollary}{corollary}{corollaries}
\crefname{theorem}{theorem}{theorems}
\crefname{enumi}{}{}
\crefname{assumption}{assumption}{Assumptions}
\crefname{equation}{}{}
\theoremstyle{nonumberplain}
\newtheorem{proof}{Proof}
\newcommand\pf[1]{\newtheorem{#1}{Proof of \Cref{#1}}}
\numberwithin{equation}{section}
\title{Free limits of free algebras}
\author{Alexandru Chirvasitu and Tao Hong}
\begin{document}

\date{}

\newcommand{\Addresses}{{
  \bigskip
  \footnotesize

  \textsc{Department of Mathematics, University at Buffalo, Buffalo,
    NY 14260-2900, USA}\par\nopagebreak \textit{E-mail address}:
  \texttt{achirvas@buffalo.edu}

  \medskip

  \textsc{Department of Mathematics, University at Buffalo, Buffalo,
    NY 14260-2900, USA}\par\nopagebreak \textit{E-mail address}:
  \texttt{thong5@buffalo.edu}

}}

\maketitle

\begin{abstract}
  Consider a diagram $\cdots \to F_3 \to F_2\to F_1$ of algebraic systems, where $F_n$ denotes the free object on $n$ generators and the connecting maps send the extra generator to some distinguished trivial element. We prove that (a) if the $F_i$ are free associative algebras over a fixed field then the limit in the category of graded algebras is again free on a set of homogeneous generators; (b) on the other hand, the limit in the category of associative (ungraded) algebras is a free formal power series algebra on a set of homogeneous elements, and (c) if the $F_i$ are free Lie algebras then the limit in the category of graded Lie algebras is again free.
\end{abstract}

\noindent {\em Key words: Lie algebra; Lie polynomial; free Lie algebra; graded Lie algebra; free algebra; graded algebra; weak algorithm; unrestricted free product; unrestricted coproduct}

\vspace{.5cm}

\noindent{MSC 2020: 08B20; 16S10; 16W50; 17B01; 17B70}


\section*{Introduction}
\label{se.intro}

This note was originally motivated by \cite[Theorem 1.1]{ess2} (or rather its earlier version, \cite[Theorem 1.2]{ess}):

\begin{theoremN}
  For any field $k$, the inverse limit in the category of graded commutative rings of the diagram
  \begin{equation}\label{eq:invlim}
    \cdots\to k[x_1,x_2,x_3]\to k[x_1,x_2]\to k[x_1]
  \end{equation}
  of polynomial rings (obtained by annihilating the extra variable at each step) is again a polynomial ring. 
\end{theoremN}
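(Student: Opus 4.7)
Let $L$ denote the inverse limit. Unwinding the definition in each graded degree $d$, an element of $L_d$ is a formal $k$-linear combination $\sum_{m \in M_d} c_m m$, where $M_d$ is the (countable) set of degree-$d$ monomials in $x_1, x_2, \ldots$ and the coefficients $c_m \in k$ are arbitrary. In particular $L$ strictly contains the ordinary polynomial ring $k[x_1, x_2, \ldots]$: for instance $\sum_{i \geq 1} x_i^2 \in L_2$ is not a finite polynomial in degree-$1$ elements of $L$, because any such finite polynomial would have to lie in $U \cdot U$ for some finite-dimensional subspace $U \subset L_1$, whereas $\sum_i x_i \otimes x_i$ visibly involves infinitely many linearly independent simple tensors.

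The plan is to produce a set of homogeneous generators $W = \bigoplus_d W_d$ and show that the induced map $\mathrm{Sym}(W) \to L$ is an isomorphism. I would build $W_d$ by induction on $d$: take $W_1 := L_1$, and for $d \geq 2$ invoke Zorn's lemma to choose a graded vector-space complement $W_d$ of $(L_+^2)_d$ in $L_d$, where $L_+ := \bigoplus_{d \geq 1} L_d$. Surjectivity of $\mathrm{Sym}(W) \to L$ in each fixed degree is then automatic, since $L_d = W_d \oplus (L_+^2)_d$ and $(L_+^2)_d$ is spanned by products of strictly lower-degree elements already in the image of $\mathrm{Sym}(W)$ by the inductive hypothesis.

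The real content is algebraic independence of the chosen generators. The cleanest strategy I can see is via Hopf algebra structure: each $k[x_1, \ldots, x_n]$ is a graded connected cocommutative Hopf algebra with $x_i$ primitive, and all the transition maps are Hopf algebra maps, so $L$ inherits a coproduct $\Delta \colon L \to L \otimes L$ valued in an appropriate completion of $L \otimes L$. Moreover $L$ is an integral domain, since every nonzero element has a well-defined leading monomial in any graded monomial order and leading monomials multiply. In characteristic zero, an infinite-dimensional analogue of the Milnor-Moore / Leray structure theorem for connected commutative cocommutative graded Hopf algebras would then force $L$ to be polynomial. In positive characteristic one appeals to an extension of Borel's decomposition theorem, writing $L$ as a tensor product of single-generator Hopf algebras $k[y_\alpha]$ and truncated polynomial Hopf algebras $k[y_\alpha]/(y_\alpha^{p^{n_\alpha}})$; integrality of $L$ forces all the truncated factors to be absent.

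The main obstacle is that the classical formulations of Milnor-Moore and Borel assume finite-dimensional graded pieces, whereas $\dim_k L_d$ is infinite for every $d \geq 1$. One must either lift these theorems to the pro-finite-dimensional / completed setting, or argue by descent: any putative polynomial relation among the generators involves only finitely many of them, so it suffices to test the relation in the honest polynomial ring $k[x_1,\ldots,x_n]$ after applying the projection $\pi_n \colon L \twoheadrightarrow k[x_1,\ldots,x_n]$ for $n$ sufficiently large. For the descent route, the inductive construction of each $W_d$ must be refined so that for every $n$ a distinguished finite subset of the generators projects to algebraically independent polynomial generators of $k[x_1,\ldots,x_n]$, while the remaining generators lie in $\ker \pi_n$; ensuring this compatibility simultaneously across all $n$ is the principal technical burden.
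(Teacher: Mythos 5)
This statement is not proved in the paper at all: it is the motivating theorem of Erman--Sam--Snowden, quoted from \cite{ess,ess2}, and the paper only proves noncommutative and Lie analogues (via Cohn's weak algorithm and Reutenauer's Lie-dependence theory, neither of which applies to commutative polynomial rings in several variables, since those fail the weak algorithm). So your proposal has to be judged against the actual ESS argument, and against that standard it has a genuine gap: the algebraic independence of the generators --- which you yourself identify as ``the real content'' --- is never established. Everything before that point (the description of $L_d$ as arbitrary formal combinations of degree-$d$ monomials, the choice of a homogeneous complement $W_d$ of $(L_+^2)_d$, surjectivity of $\mathrm{Sym}(W)\to L$ by induction on degree, and the observation that $L$ is a domain) is correct but is the easy half; it is exactly where the ESS proof also starts, and it is not where the difficulty lies.

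Neither of your two proposed routes closes the gap. The Hopf-algebra route runs into the fact that $\Delta$ lands only in a completed tensor product and that Milnor--Moore/Leray/Borel require locally finite graded pieces, as you note; worse, Borel's decomposition in characteristic $p$ requires a \emph{perfect} field, and the imperfect case is precisely why \cite{ess2} exists as a separate paper --- so ``integrality kills the truncated factors'' cannot be the whole story over an arbitrary $k$. The descent route fails as stated because $\pi_n\colon L\to k[x_1,\dots,x_n]$ collapses the infinite-dimensional space $W_1=L_1$ onto an $n$-dimensional one, so a finite subset of $W$ generally does \emph{not} project to algebraically independent elements for any $n$, and a relation surviving projection is no contradiction; arranging the generators ``compatibly across all $n$'' is not a technical refinement but the entire theorem. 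What actually makes the ESS proof work is a quantitative input you do not have: the Ananyan--Hochster notion of \emph{strength}. One shows that homogeneous elements lying outside $L_+^2$ have infinite (collective) strength, and that finitely many homogeneous elements of infinite collective strength are algebraically independent (indeed form a regular sequence), which is a statement about finitely many elements at a time rather than about a globally compatible choice of $W$. Without some substitute for that input, your argument establishes only that $L$ is generated by $W$, not that it is polynomial on $W$.
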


This is a curious and rather unexpected phenomenon: polynomial rings are the free objects in the category of commutative algebras (`free' in the sense of universal algebra, e.g. \cite[Definition 7.8.3]{berg-inv}), and hence expressible as coproducts. On the other hand, \Cref{eq:invlim} is a {\it limit} (rather than a {\it co}limit), so one would not necessarily expect compatibility between the two.

The natural question arises of which other varieties of algebras (apart from graded commutative algebras) exhibit the same type of freeness behavior: given free objects $F_n$ on sets of $n$ elements respectively, one can construct analogous diagrams
\begin{equation*}
  \cdots\to F_3\to F_2\to F_1
\end{equation*}
provided the algebras in question are equipped with a distinguished element $e$: the extra free generator  of $F_{n+1}$ can be sent to $e\in F_n$, giving the map $F_{n+1}\to F_n$. Examples include
\begin{itemize}
\item any number of ``linear'' varieties of algebras (commutative, associative, Jordan, etc.), with $0$ as the trivial element;
\item groups, monoids, and so on, with their respective trivial elements. 
\end{itemize}

Groups, in particular, have been studied from this perspective by Higman: consider a limit
\begin{equation*}
  \lim\left(\cdots\to G_1*G_2*G_3\to G_1*G_2\to G_1\right)
\end{equation*}
in the category of groups, where
\begin{itemize}
\item $G_i$ are groups;
\item `$*$' denotes the coproduct (or free product) of groups;
\item each connecting map annihilates the extra free factor and acts as the identity on the others. 
\end{itemize}
In the language of \cite{hig-unr}, that limit is the {\it unrestricted free product} of the groups $G_i$. In particular, when all $G_i$ are isomorphic to $\bZ$, one obtains a kind of completion of a free group on countably-infinitely many generators, denoted here by $F$. The analogue of the question posed above is whether $F$ is again free. Higman shows that not only is this not the case, but in fact $F$ is in a sense at the opposite end of a freeness spectrum (\cite[Theorem 1]{hig-unr}):

\begin{theoremN}
  Let $F_n$ be the free group on $n$ generators, and consider the limit
  \begin{equation*}
    F:=\lim\left(\cdots \to F_3\to F_2\to F_1\right)
  \end{equation*}
  in the category of groups. Then, any morphism from $F$ into a free group factors through one of the $F_n$.
\end{theoremN}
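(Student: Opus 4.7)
The plan is to assume for contradiction that $\phi \colon F \to H$ (with $H$ a free group) is a homomorphism that does not factor through any $F_n$, and exploit the ``infinite-product'' structure of $F$ to contradict the residual nilpotence of $H$.

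First I would describe $F$ concretely as the set of compatible sequences $(w_n)_{n\ge 1}$ with $w_n\in F_n$, together with its descending kernel filtration $K_n:=\ker(F\to F_n)$ satisfying $\bigcap_n K_n = \{1\}$. The crucial technical observation is that for any sequence $u_1,u_2,\dots\in F$ with $u_k\in K_{m_k}$ for some strictly increasing $m_k$, the infinite product $\prod_k u_k$ is a well-defined element of $F$: its projection to each $F_N$ has only finitely many nontrivial factors (all but finitely many lie in $K_N$, which projects to $1$), so the compatible sequence is unambiguously determined. The same is true of infinite iterated commutator expressions in the $u_k$.

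Next, using the assumption that $\phi$ does not factor through any $F_n$, I would inductively choose $u_k\in K_{m_k}$ with $\phi(u_k)\ne 1$, taking $m_k$ to grow as rapidly as needed. Each $\phi(u_k)$ is a nontrivial element of the free group $H$, and by residual nilpotence of $H$ (i.e., $\bigcap_N \gamma_N(H)=\{1\}$) has some finite commutator depth in the lower central series. The central construction is then an element $w\in F$ assembled from the $u_k$ by an infinite iterated-commutator scheme---e.g.\ $w=\prod_{N\ge 1}[u_1,[u_2,[\dots,u_N]\cdots]]$ or a variant---engineered so that, for every fixed $N$, all but finitely many factors of $w$ lie in $\gamma_N(F)$. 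This forces $\phi(w)\in\gamma_N(H)$ for every $N$; residual nilpotence of $H$ then yields $\phi(w)=1$, which one must then contradict by checking (via the Magnus embedding of $H$ into non-commutative power series, or by explicit reduced-word bookkeeping) that the leading non-vanishing term of $\phi(w)$ survives and is nonzero.

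The main obstacle is precisely this last bookkeeping: iterated commutators in a free group can accidentally collapse to the identity, so having $\phi(u_k)\ne 1$ is not sufficient. One must arrange that the $\phi(u_k)$ are suitably ``independent'' in $H$---for instance, that their lowest-degree terms under the Magnus embedding are linearly independent, so that the relevant commutator brackets are guaranteed to survive at the leading order. Securing this independence demands a more delicate inductive choice of the $u_k$ than the bare nontriviality supplied by the non-factoring hypothesis, and this is where the substance of the argument lies; everything else is essentially the bookkeeping needed to show the resulting $w$ is both well-defined in $F$ and forced into $\bigcap_N \gamma_N(H)$ with nontrivial image.
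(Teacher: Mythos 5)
This statement is Higman's theorem, quoted in the introduction from \cite[Theorem 1]{hig-unr}; the paper does not prove it, so there is no in-paper argument to compare against. Judged on its own terms, your proposal has a genuine gap at its central step. You construct $w$ as an infinite product whose factors eventually all lie in $\gamma_N(F)$ and conclude that ``this forces $\phi(w)\in\gamma_N(H)$ for every $N$.'' That inference is unjustified: $\phi$ is an abstract group homomorphism, not a continuous map, so $\phi$ of an infinite product in $F$ is not the ``limit'' of $\phi$ of its partial products. To run your argument you would need the tail of $w$ --- an \emph{infinite} product of elements of $\gamma_N(F)$ --- to lie in the abstract verbal subgroup $\gamma_N(F)$, i.e.\ to be a \emph{finite} product of $N$-fold commutators. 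That is false in general: verbal subgroups of unrestricted free products are not closed under these limits (already $[F,F]$ is strictly smaller than $\varprojlim\,[F_n,F_n]$, by commutator-width considerations), and the question of when such verbal subgroups are closed is essentially the subject of Higman's paper. So the strategy begs the hard question rather than resolving it; the same difficulty is why the Specker-style argument for $\bZ^{\bN}\to\bZ$ (where divisibility by $k!$ \emph{is} preserved by infinite sums) does not transfer. You also explicitly defer the other half of the proof, the verification that $\phi(w)\ne 1$.

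For contrast, Higman's actual argument sidesteps both issues: he defines elements by a backwards recursion of the form $v_n=u_n\cdot(\text{fixed word in }v_{n+1})$, which converges in $F$ by completeness, and then applies $\phi$ only to the resulting \emph{finite} product identities $\phi(v_n)=\phi(u_n)\cdot(\text{word in }\phi(v_{n+1}))$ in $H$. A reduced-word-length and cancellation analysis in the target free group $H$ then shows these equations cannot all hold with every $\phi(u_n)\ne 1$. If you want to salvage a proof along your lines, you would have to replace the residual-nilpotence mechanism with something compatible with the discontinuity of $\phi$; the finite recursive identities are the standard way to do that.
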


This shows that the freeness result recorded in \cite{ess,ess2} is far from being a given. Here, we prove a number of cognates. First, \Cref{th:gr-fr} is a direct non-commutative analogue of \cite[Theorem 1]{ess2}; the statement in the main text is more precise, but roughly, it reads:

\begin{theorem}
  Let $k$ be a field and $A_n$ the free algebra on $n$ generators. Then, the limit
  \begin{equation}\label{eq:assoclim}
    \lim\left(\cdots\to A_3\to A_2\to A_1\right)
  \end{equation}
  in the category of graded associative algebras is free on a set of homogeneous elements.
\end{theorem}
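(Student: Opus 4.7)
The strategy is to recognize $L$ as a free graded algebra via Cohn's weak-algorithm criterion: a connected graded $k$-algebra $R = k \oplus R_1 \oplus R_2 \oplus \cdots$ is isomorphic to a free associative $k$-algebra on a set of homogeneous generators if and only if it satisfies the (graded) weak algorithm with respect to the grading filtration. Each finite-stage $A_n$ is manifestly free and hence satisfies the weak algorithm, so the task reduces to transferring this property to the limit.

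First I would describe $L$ explicitly. Since limits in the category of graded algebras are computed degree by degree, $L_d = \varprojlim_n (A_n)_d$ is canonically the set of formal (possibly infinite) $k$-linear combinations $\sum_w c_w w$ indexed by length-$d$ words $w$ in the non-commuting alphabet $\{x_1, x_2, \ldots\}$, with multiplication given by word concatenation. The projection $\pi_n \colon L \twoheadrightarrow A_n$ sets $c_w = 0$ for every word $w$ involving some $x_i$ with $i > n$. In particular each $L_d$ has uncountable $k$-dimension, so $L$ is in no sense merely an enlargement of a countable free algebra; this already indicates that a ``pointwise'' lifting of the freeness of the $A_n$ cannot succeed.

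Next I would verify the weak algorithm for $L$. Suppose homogeneous $a_1, \ldots, a_m \in L$ of degrees $d_1 \ge \cdots \ge d_m$ are right $v$-dependent, meaning there exist homogeneous $b_i \in L$ such that $\sum_i a_i b_i$ has degree strictly less than $\max_i (d_i + \deg b_i)$. Applying $\pi_n$ for each $n$ produces a compatible family of $v$-dependencies in the free algebras $A_n$; the weak algorithm for $A_n$ then yields, at each level $n$, a reduction expressing some $\pi_n(a_j)$ as a linear combination of the earlier $\pi_n(a_i)$ plus a lower-degree correction. The core of the argument is to show that these level-$n$ reductions can be chosen canonically---say by fixing a lexicographic order on words compatible with the truncation maps---so that the scalars they produce stabilize as $n \to \infty$ and glue into an honest reduction inside $L$.

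The main obstacle is precisely this stabilization: the $a_i$ need not lie in any single $A_n$, and the index $j$ and the coefficients output by the weak algorithm at level $n$ may a priori depend on $n$. Overcoming this requires exploiting the uniqueness of the normal form produced by the weak algorithm once a monomial order is fixed, together with the observation that such an order descends along the projections $A_{n+1} \twoheadrightarrow A_n$; this should force the witnesses of dependence at successive stages to be mutually compatible, hence to assemble into an element of the limit. With the weak algorithm for $L$ established, Cohn's structure theorem immediately furnishes the homogeneous free generating set asserted in the theorem.
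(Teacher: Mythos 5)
Your overall framework (Cohn's weak-algorithm criterion for freeness of a connected graded algebra, plus the explicit description of the limit as formal infinite linear combinations of monomials in each degree) matches the paper's. But the one step that actually carries the proof --- verifying the weak algorithm for the limit --- is not established. You reduce it to the claim that the level-$n$ witnesses of dependence produced by the weak algorithm in each $A_n$ ``can be chosen canonically'' and ``should'' stabilize, appealing to ``the uniqueness of the normal form produced by the weak algorithm once a monomial order is fixed.'' No such uniqueness exists: the weak algorithm is a pure existence statement, and both the index $j$ of the dependent element and the coefficients $b_i$ witnessing its dependence are highly non-unique, so there is no canonical output that automatically commutes with the truncations $A_{n+1}\twoheadrightarrow A_n$. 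As written, the ``core of the argument'' is a statement of the difficulty rather than a resolution of it. (The approach is not hopeless: one can fix a single index $j$ that works for a cofinal set of $n$ by pigeonhole, observe that for homogeneous data the witnesses at level $n$ form a non-empty affine subspace of a finite-dimensional solution space, and extract a compatible family by a Mittag--Leffler argument on the decreasing images. But none of this is in your proposal, and it is the entire content of the step.)

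The paper avoids the stabilization problem altogether by working directly in the limit algebra: given a homogeneous relation $\sum a_i b_i=0$ with the $a_i$ ordered by degree, it runs a double induction on the number of terms and on $|b_n|$. The key observation is that every degree-$\ge 1$ monomial ends in a unique generator, so one may fix a generator $x_s$ occurring as the rightmost letter of some monomial of $b_n$, project the relation onto the summands ending in $x_s$, cancel $x_s$ on the right to get $\sum a_i b_i'=0$ with $|b_n'|<|b_n|$, and recurse until some $b_j$ is a non-zero scalar, at which point $a_j$ is visibly right dependent on the earlier $a_i$. This works verbatim for infinite formal linear combinations and requires no compatibility across the finite stages. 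You should either adopt such a direct argument or supply the compactness/stabilization argument you only gesture at; the appeal to canonical normal forms cannot be repaired as stated.
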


On the other hand, if one were to instead take the limit \Cref{eq:assoclim} in the category of {\it plain} (as opposed to graded) associative algebras, a trace of this freeness behavior survives (\Cref{th.ungr-fr}):

\begin{theorem}
  Let $k$ be a field and $A_n$ the free algebra on $n$ generators. Then, the limit
  \begin{equation}\label{eq:assoclim}
    \lim\left(\cdots\to A_3\to A_2\to A_1\right)
  \end{equation}
  in the category of associative algebras is a formal power series algebra on a set of homogeneous elements.
\end{theorem}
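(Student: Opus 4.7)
The plan is to derive the ungraded statement from the just-established graded freeness theorem by identifying $L^{\mathrm{ug}} := \varprojlim A_n$ as a suitable completion of $L^{\mathrm{gr}}$ with respect to a natural degree filtration.

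First I would equip $L^{\mathrm{ug}}$ with the augmentation-degree filtration $F^k L^{\mathrm{ug}} := \{(a_n)_n : a_n \in I_n^k \text{ for every } n\}$, where $I_n \subset A_n$ denotes the augmentation ideal. One checks routinely that $F$ is a decreasing ring filtration, and that the natural assignment sending a class $(a_n)_n + F^{k+1}$ to the tuple of degree-$k$ homogeneous components of the $a_n$ furnishes an isomorphism of graded algebras $\operatorname{gr}^F L^{\mathrm{ug}} \cong L^{\mathrm{gr}}$. Combined with the preceding theorem, this yields $\operatorname{gr}^F L^{\mathrm{ug}} = k\langle S\rangle$ for some set $S$ of homogeneous generators.

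Realizing $S$ as a subset $\widetilde S \subset L^{\mathrm{ug}}$ through the natural embedding $L^{\mathrm{gr}} \hookrightarrow L^{\mathrm{ug}}$, the main step is to show that every element $(a_n)_n \in L^{\mathrm{ug}}$ has a unique formal power series expansion $\sum_w c_w w$ over monomials $w$ in $\widetilde S$. To construct such an expansion, decompose each $a_n$ into its finite homogeneous parts $a_n = \sum_d a_n^{(d)}$, assemble the tuples $\alpha_d := (a_n^{(d)})_n \in L^{\mathrm{gr}}_d$, and expand each $\alpha_d$ in the monomial basis of $L^{\mathrm{gr}}_d = k\langle S\rangle_d$. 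The collected series reconstructs $(a_n)_n$ because its image in each $A_n$ involves only finitely many degrees (at most $\deg a_n$); uniqueness follows from the freeness of the associated graded; and multiplicativity of the correspondence is a straightforward consequence of the graded structure on $\operatorname{gr}^F L^{\mathrm{ug}}$.

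The main obstacle is articulating precisely what \emph{free formal power series algebra on a set of homogeneous elements} means in this context. $L^{\mathrm{ug}}$ is \emph{not} $I$-adically complete in the naive sense: the partial sums $y_N := \sum_{k=1}^N x_1^k$ form a Cauchy sequence in $L^{\mathrm{ug}}$ with no limit there, so $L^{\mathrm{ug}}$ is strictly smaller than the Magnus completion $\varprojlim_k k\langle \widetilde S\rangle/(\widetilde S)^k$. The relevant object is rather the distinguished subalgebra of that completion consisting of series whose image in each $A_n$ is a polynomial; formulating this as a universal free object and verifying that $L^{\mathrm{ug}}$ satisfies the corresponding universal property is the technical heart of the proof.
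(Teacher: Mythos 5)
Your route is genuinely different from the paper's, and it stops short of a proof. The paper does not bootstrap from the graded case at all: it verifies Cohn's \emph{inverse weak algorithm} for $B=B(S)$ directly, by the same rightmost-generator-stripping induction used for \Cref{th:gr-fr} (now descending on $\nu(b_n)$, with the base case $\nu(b_n)=0$ handled via invertibility of order-zero elements), and then invokes \Cref{pr.chn-inv}, which manufactures the power-series generating set without your having to exhibit one. Your plan instead identifies $\operatorname{gr}B$ with $A((S))$, applies \Cref{th:gr-fr} to obtain a homogeneous free generating set $X$ of the associated graded, and expands elements of $B$ degreewise in monomials on a lift of $X$. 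That much is correct, but it only exhibits $B$ as a subalgebra of the degree-completion $k\langle\langle X\rangle\rangle$ --- and, as you yourself compute, a \emph{proper} subalgebra, since $\sum_k x_1^k$ lies in the completion but not in $B$. Being a proper subalgebra of a power series algebra is not the asserted conclusion, and your final paragraph explicitly defers ``the technical heart'' (showing that this subalgebra is itself free in an appropriate power-series sense, or satisfies a suitable universal property) without offering a strategy for it. A proof that postpones its main step is a reduction, not a proof.

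The obstruction you have put your finger on is, moreover, genuine and not a matter of routine bookkeeping. Since $B$ is not complete for the order filtration $\nu$ (your Cauchy sequence $\sum_{k\le N}x_1^k$ has no limit in $B$), and since the units of $B$ are exactly $k^\times$ (each coordinate of a unit must be a unit of a free algebra, hence a scalar), $B$ cannot be isomorphic in the standard sense to a completed power series algebra on a nonempty set of homogeneous elements: in any such algebra $1-x$ is invertible for $x$ a generator. So the deferred step cannot be completed in the naive form and would require either a nonstandard reading of ``formal power series algebra'' or a reformulation of the statement. You should also be aware that this tension is present in the source itself: the paper's proof of \Cref{th.ungr-fr} asserts parenthetically that $B$ is ``clearly'' $\nu$-complete before applying \Cref{pr.chn-inv}, and your example contradicts that assertion. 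Identifying the difficulty is valuable, but your proposal neither resolves it nor reproduces the paper's direct argument via the inverse weak algorithm.
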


Finally, there is a graded-Lie-algebra version of the above (\Cref{th:lie-fr}):

\begin{theorem}
    Let $k$ be a field and $L_n$ the free Lie algebra on $n$ generators. Then, the limit
  \begin{equation}\label{eq:assoclim}
    \lim\left(\cdots\to L_3\to L_2\to L_1\right)
  \end{equation}
  in the category of graded Lie algebras is free on a set of homogeneous elements.
\end{theorem}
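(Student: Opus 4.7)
The plan is to deduce the Lie result from the graded associative \Cref{th:gr-fr}, using the classical embedding of each $L_n$ into $A_n$ as the Lie subalgebra generated by the free generators. Since these embeddings are compatible with the connecting maps (both send the extra generator to zero), they induce an embedding $L := \lim L_n \hookrightarrow A := \lim A_n$ of graded objects. By \Cref{th:gr-fr}, $A$ is a free graded associative algebra on a set of homogeneous generators.

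Each $A_n$ carries its standard graded cocommutative Hopf structure, in which the free generators are primitive, and the transition maps are Hopf morphisms. Because each graded piece $(A_n)_d$ is finite-dimensional and the transition maps surjective, the comultiplications pass to the limit degree-by-degree (into a suitable completion of $A \otimes A$). Primitivity being a linear condition in each graded degree, it commutes with the inverse limit, giving $\mathrm{Prim}(A)_d = \lim_n \mathrm{Prim}(A_n)_d$. In characteristic zero, the Friedrichs--Magnus--Specht--Wever theorem identifies $\mathrm{Prim}(A_n) = L_n$, hence $\mathrm{Prim}(A) = L$. Writing $A \cong T(W)$ with $W \cong A_+/A_+^2$ via \Cref{th:gr-fr}, the key structural step is to arrange a section $W \hookrightarrow A$ landing in $\mathrm{Prim}(A)$; once this is done, $A \cong T(W)$ is the standard Hopf algebra with $W$ primitive, and Friedrichs' classical theorem gives $\mathrm{Prim}(T(W)) = \mathrm{FreeLie}(W)$, whence $L = \mathrm{FreeLie}(W)$ is free on $W$.

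The main obstacle is the primitive lifting step. The graded pieces $A_d$ of the limit are very large (each isomorphic to $k^{\mathbb{N}^d}$), while the products $A_p \cdot A_q \subset A_{p+q}$ realize only ``finite-rank'' elements in this description; consequently, a naive modification of an arbitrary lift by an element of $A_+^2$ need not suffice to enforce primitivity, and an inductive construction of the free generators --- likely paralleling (rather than simply invoking) the proof of \Cref{th:gr-fr} --- seems unavoidable. A secondary difficulty is the positive-characteristic case, in which $\mathrm{Prim}(A_n) \supsetneq L_n$ because of Frobenius powers of Lie elements, so $L \neq \mathrm{Prim}(A)$; there a direct Lie-theoretic argument --- for instance via a Shirshov-type weak algorithm in place of Cohn's --- would be required.
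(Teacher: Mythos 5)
You have correctly identified the main obstacle, but it is worse than a technical difficulty: the primitive lifting step is genuinely impossible, so the proposed reduction to \Cref{th:gr-fr} via Friedrichs' theorem cannot be completed. Concretely, take $S=\{1,2,\dots\}$ and consider $w=\sum_i x_i^2\in A_2$ (a legitimate element of the limit, since only finitely many of its monomials survive in each $k\langle F\rangle$). Its class in $A_{>0}/A_{>0}^2$ is nonzero, because $(A_{>0}^2)\cap A_2=A_1\cdot A_1$ consists exactly of those elements whose coefficient matrix $(c_{ij})$ has finite rank. If $w$ were congruent modulo $A_{>0}^2$ to a Lie element (in degree $2$ these are precisely the formal antisymmetric matrices, since $L_2=\varprojlim (L_n)_2$), we would have $I=C+F$ with $C$ antisymmetric and $F$ of finite rank, whence $2I=F+F^{T}$ would have finite rank --- absurd in characteristic zero. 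So no homogeneous free generating set of $A$ as in \Cref{th:gr-fr} can be chosen inside $\mathrm{Prim}(A)$; the identification $A\cong T(W)$ with $W$ primitive never holds, and the chain $L=\mathrm{Prim}(A)=\mathrm{Prim}(T(W))=\mathrm{FreeLie}(W)$ breaks at its second link. The example also shows that the free Lie generators of $L$ and the free associative generators of $A$ are not matched in the way the finite-generator case would suggest, so ``paralleling the proof of \Cref{th:gr-fr}'' cannot be done at the level of the abelianization $A_{>0}/A_{>0}^2$ either.

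The paper therefore takes a different, intrinsically Lie-theoretic route: it extends Reutenauer's Lie analogue of Cohn's dependence theory to the limit algebra $A((S))$ (\Cref{th:lie-is-dep}), the key point being \Cref{pr:idlie}, where a degree count shows that only finitely many Lie monomials can occur in the expressions $p^F=f_F(p_1^F,\dots,p_n^F)$ over all finite $F\subset S$, so that a cofinal subfamily of the $f_F$ stabilizes to a single Lie polynomial valid in the limit. Freeness is then established in \Cref{th:weak} on a generating set built degree by degree from complements of $E_n\cap\langle E_{n-1}\rangle$ inside $E_n$, and upgraded to arbitrary homogeneous sets projecting to a basis of $L/[L,L]$ by \Cref{pr:allhomog}. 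Your secondary concern about positive characteristic does not arise only because the paper restricts to characteristic zero in the Lie section; but even in characteristic zero your strategy requires replacing the appeal to \Cref{th:gr-fr} by a direct argument at the Lie level, which is essentially what the paper's proof supplies.
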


\subsection*{Acknowledgements}

This work was partially supported through NSF grant DMS-2001128.

The material constitutes part of the second author's PhD thesis at the University at Buffalo.

\section{Preliminaries}
\label{se.prel}

We work over an arbitrary field $k$ (all additional properties, e.g. being perfect, will be specified if and when needed). The algebras under consideration will often be graded, meaning here $\bN$-graded where $\bN=\{0,1,\cdots\}$. As \cite{chn-fr} will be a central reference throughout, we follow it in denoting by $\nu$ the ``highest-degree'' function on a filtered algebra
\begin{equation*}
  0=A_{(-\infty)}\subset A_{(0)}\subset A_{(1)}\subset \cdots, \ A=\cup_{n}A_{(n)}
\end{equation*}
i.e. $\nu(x)$ is the smallest $n$ such that $x\in A_{(n)}$. Our algebras will sometimes be graded, as in
\begin{equation*}
  A=\bigoplus_{d\in \bN}A_d,
\end{equation*}
in which case we consider the corresponding filtration given by
\begin{equation*}
  A_{(n)} = \bigoplus_{d=0}^n A_d. 
\end{equation*}
For homogeneous elements in the graded case we also sometimes resort to $|x|$ for the degree of $x$. Our graded algebras will also often be {\it connected}: $A_0=k$, the ground field.

We will need some auxiliary material from \cite[Chapter 2]{chn-fr}, which we now present briefly. First, recall the following discussion from \cite[$\S$2.2]{chn-fr}. 

\begin{definition}\label{def.dep}
  A family $(a_i)_i$ of elements of $A$ is {\it right $\nu$-dependent} if one of the $a_i$ vanishes, or there exist $b_i\in A$, almost all zero, such that
  \begin{equation*}
    \nu\left(\sum a_ib_i\right) < \max_i(\nu(a_i)+\nu(b_i)). 
  \end{equation*}

  An element $a\in A$ is {\it right $\nu$-dependent} on the family $(a_i)_i$ if $a=0$ or there are $b_i\in A$, almost all zero, such that
  \begin{equation*}
    \nu\left(a-\sum a_ib_i\right) < \nu(a),\quad \max_i(\nu(a_i)+\nu(b_i))\le \nu(a). 
  \end{equation*}
\end{definition}

The two properties above are ordered strength-wise: the right $\nu$-dependence of $a$ on $(a_i)$ entails the right $\nu$-dependence of the family $(a_i)\cup\{a\}$. Rings that in a certain sense satisfy the converse of this observation are the focus of \cite[Chapter 2]{chn-fr}, as they tend to have good ``freeness'' properties.

\begin{definition}\label{def.wk}
  The filtered algebra $A$ {\it has (or satisfies) the weak algorithm} for $\nu$ if, for any $\nu$-dependent family $(a_i)$ in the sense of \Cref{def.dep} some $a_i$ is $\nu$-dependent on the family of those $a_j$ with $\nu(a_j)\le \nu(a_i)$. 
\end{definition}

As hinted above, the importance of the concept for us is encapsulated by the following result (\cite[Proposition 2.4.2]{chn-fr}). 

\begin{proposition}\label{pr.chn}
  A filtered $k$-algebra $A$ with $A_{(0)}=k$ is free on some subset $X$ of positive-degree elements if and only if it satisfies the weak algorithm.
\end{proposition}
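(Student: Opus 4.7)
The proposition is a biconditional, and I would handle the two implications separately. The forward direction $(\Rightarrow)$ is reasonably direct: assuming $A = k\langle X\rangle$ with $X$ positive-degree, filtered by total degree in generators, any $\nu$-dependent family $(a_i)$ yields, upon extracting top-degree homogeneous components from the relation $\nu\bigl(\sum a_i b_i\bigr) < d := \max_i(\nu(a_i)+\nu(b_i))$, a nontrivial identity $\sum_{i:\,\nu(a_i)+\nu(b_i)=d} a_i^{\mathrm{top}} b_i^{\mathrm{top}} = 0$ among leading parts. Since distinct monomials in the free generators are $k$-linearly independent, one can solve for the top part of some $a_{i_0}$ as a $k$-linear combination of $a_j^{\mathrm{top}}$'s right-multiplied by appropriate elements of $A$, subject to $\nu(a_j) \leq \nu(a_{i_0})$; this is precisely the right $\nu$-dependence of $a_{i_0}$ on the $a_j$ with $\nu(a_j) \leq \nu(a_{i_0})$.

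The substantive direction is $(\Leftarrow)$. Assuming the weak algorithm, I would construct $X = \bigcup_{n\ge 1} X_n$ inductively: choose $X_n \subseteq A_{(n)}$ so that the degree-$n$ concatenated monomials in $X_{\le n}$ (including the length-one monomials from $X_n$ itself) project to a basis of $A_{(n)}/A_{(n-1)}$. A straightforward extension-of-basis argument makes each step possible. By construction and induction on filtration degree, the monomials in $X$ together with $1$ span $A$. For linear independence, suppose a nontrivial relation $\sum_\alpha \lambda_\alpha m_\alpha = 0$ among distinct monomials $m_\alpha$ in $X$. Taking $b_\alpha = \lambda_\alpha \in k$ shows the family $(m_\alpha)$ is $\nu$-dependent in the sense of \Cref{def.dep}, so the weak algorithm supplies some $m_{\alpha_0}$ and elements $b_\alpha \in A$ ($\alpha \neq \alpha_0$) with
\begin{equation*}
\nu(m_\alpha) + \nu(b_\alpha) \leq n := \nu(m_{\alpha_0}), \qquad \nu\Bigl(m_{\alpha_0} - \sum_{\alpha \neq \alpha_0} m_\alpha b_\alpha\Bigr) < n.
\end{equation*}
Expanding each $b_\alpha$ as a $k$-linear combination of monomials in $X$ modulo lower filtration degree, the products $m_\alpha b_\alpha$ become, modulo $A_{(n-1)}$, $k$-linear combinations of concatenated monomials of degree $n$ in $X$. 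This exhibits $m_{\alpha_0}$ as a $k$-linear combination of other such monomials modulo $A_{(n-1)}$, contradicting the chosen basis of $A_{(n)}/A_{(n-1)}$.

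The main obstacle is the final step of the preceding paragraph: promoting the $\nu$-dependence supplied by the weak algorithm, which \emph{a priori} involves arbitrary multipliers $b_\alpha \in A$, into a $k$-linear relation among top-degree monomials. The conversion requires substituting each $b_\alpha$ by its monomial expansion in $X$ and tracking which terms contribute to the degree-exactly-$(n - \nu(m_\alpha))$ part. Running this requires care in the inductive setup, since one must arrange the selection of $X_n$ so that the newly adjoined generators remain linearly independent, in $A_{(n)}/A_{(n-1)}$, from all concatenations of elements of $X_{<n}$ that happen to land in filtration degree $n$; the bookkeeping for this — ensuring that the concatenation rule for monomials in $X$ is compatible with the given filtration on $A$ — is where the real work of the proof lies.
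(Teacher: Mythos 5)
First, a framing point: the paper does not prove this proposition at all --- it is quoted directly from Cohn as \cite[Proposition 2.4.2]{chn-fr} --- so the comparison can only be with the standard proof there (parts of which the paper re-runs when verifying the three conditions in the proof of \Cref{th:gr-fr}). Your proposal has the right overall architecture, but both key steps are left undone. In the forward direction, the assertion that from $\sum a_i^{\mathrm{top}}b_i^{\mathrm{top}}=0$ ``one can solve for the top part of some $a_{i_0}$'' is not a consequence of the linear independence of monomials alone; it \emph{is} the weak algorithm for free algebras, and proving it requires the induction on the rightmost generator occurring in $b_n$ (restrict to terms ending in a fixed $x_s$, cancel $x_s$, reduce $|b_n|$) that the paper carries out explicitly in the first step of the proof of \Cref{th:gr-fr}. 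Asserting it is assuming the conclusion of that direction.

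The gap in the converse is more serious. Your construction of $X_n$ by ``a straightforward extension-of-basis argument'' presupposes that the degree-$n$ concatenated monomials in $X_{\le n-1}$ are already linearly independent modulo $A_{(n-1)}$; that is the heart of the theorem, it is exactly where the weak algorithm must enter, and you explicitly defer it (``where the real work of the proof lies'') without doing it. Note also that once this is established, $k$-linear independence of all monomials is immediate by reducing a relation modulo $A_{(n-1)}$ for $n$ the top filtration degree, so the separate contradiction argument you run in detail is redundant --- and, as written, it does not close: the weak algorithm only gives $m_{\alpha_0}\equiv\sum_{\alpha\neq\alpha_0}m_\alpha b_\alpha \pmod{A_{(n-1)}}$ with $b_\alpha\in A$, and after expanding the $b_\alpha$ the monomial $m_{\alpha_0}$ can legitimately reappear on the right as a concatenation $m_\alpha w$ whenever some $m_\alpha$ is a proper left factor of $m_{\alpha_0}$ (e.g.\ $xy=x\cdot y$), in which case comparing coefficients yields no contradiction with the basis property. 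The standard repair, which is Cohn's route and is mirrored by the conditions the paper checks for \Cref{th:gr-fr}, is to take $X$ to be a \emph{minimal} generating set, i.e.\ one in which no element is right $\nu$-dependent on the rest; one then groups a putative relation $\sum_w\lambda_w w=0$ by leftmost letter as $\sum_{x\in X}xp_x+\lambda=0$, applies the weak algorithm to the family $X\cup\{1\}$ to contradict that minimality unless every $p_x$ vanishes, and inducts on monomial length. Some argument of this kind, exploiting $\nu$-independence of $X$ rather than mere basis extension, is needed to complete your direction $(\Leftarrow)$.
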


A parallel discussion can be carried out for algebras equipped with an {\it inverse filtration} as in \cite[$\S$2.9]{chn-fr}:
\begin{equation*}
  A=A_{(0)}\supset A_{(1)}\supset \cdots\supset 0=A_{(\infty)}. 
\end{equation*}
In this case for $x\in A$ we denote by $\nu(x)$ the largest $n$ such that $x\in A_{(n)}$. $\nu$-dependence can be defined as before by simply reversing the inequalities. We then have

\begin{definition}\label{def.inv-wk}
  The inversely filtered algebra $A$ {\it has the inverse weak algorithm} for $\nu$ if for any $\nu$-dependent family $(a_i)$ in the sense of \Cref{def.dep} with reversed inequalities some $a_i$ is $\nu$-dependent on the family of those $a_j$ with $\nu(a_j)\le \nu(a_i)$. 
\end{definition}

In the inverse-filtration setting the analogue of \Cref{pr.chn} reads as follows (see \cite[Proposition 2.9.8]{chn-fr}).

\begin{proposition}\label{pr.chn-inv}
  A complete inversely filtered $k$-algebra $A$ with $A/A_{(1)}\cong k$ is a formal power series algebra on some subset $X$ if and only if it satisfies the inverse weak algorithm. 
\end{proposition}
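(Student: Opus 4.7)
The plan is to reduce to the graded/forward counterpart \Cref{pr.chn} via the associated graded algebra $G := \mathrm{gr}(A) = \bigoplus_{n\ge 0} A_{(n)}/A_{(n+1)}$, equipped with its natural $\bN$-grading; the hypothesis $A/A_{(1)}\cong k$ makes $G$ connected, and the standard filtration $G_{(n)} = \bigoplus_{d\le n} G_d$ places it in the forward-filtered setting of \Cref{def.wk}. The central technical observation underlying both implications is that the inverse weak algorithm on $A$ corresponds, level by level, to the forward weak algorithm on $G$, so that \Cref{pr.chn} applied to $G$ can be transported back.

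The ``only if'' direction is a direct verification. If $A = k\langle\langle X\rangle\rangle$ on a homogeneous positive-degree set $X$, then $G = k\langle X\rangle$, which satisfies the forward weak algorithm by \Cref{pr.chn}. Given an inverse $\nu$-dependent family $(a_i)$ in $A$ witnessed by $(b_i)$, restrict to indices attaining the minimum $m := \min_i(\nu(a_i) + \nu(b_i))$ and take initial parts $\overline{a_i}\in G_{\nu(a_i)}$, $\overline{b_i}\in G_{\nu(b_i)}$: the sum $\sum \overline{a_i}\,\overline{b_i}$ is the degree-$m$ homogeneous component of $\sum a_ib_i\in A_{(m+1)}$ and hence vanishes. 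The resulting forward $\nu_G$-dependence of the homogeneous family $(\overline{a_i})$ in $G$ together with the weak algorithm there yields a $\nu_G$-dependence of some $\overline{a_{i_0}}$ on $\{\overline{a_j} : \nu(a_j)\le\nu(a_{i_0})\}$, which lifts back to $A$ as the required inverse $\nu$-dependence of $a_{i_0}$ on those $a_j$.

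The substantive ``if'' direction runs the analogous translation in reverse to show that $G$ satisfies the forward weak algorithm: a $\nu_G$-dependent homogeneous family in $G$, restricted to indices achieving the top degree $M$ and lifted to $A$ with matching $\nu$-values, becomes inverse $\nu$-dependent in $A$ (the lifted combination lands in $A_{(M+1)}$ while all $\nu$-sums equal $M$); the inverse weak algorithm then produces a relation in $A$ which, read back in $G$, is the desired $\nu_G$-dependence. By \Cref{pr.chn}, $G \cong k\langle Y\rangle$ for some homogeneous positive-degree set $Y$. Lifting $Y$ to $X \subset A$ at matching filtration levels yields a natural algebra map $\varphi : k\langle\langle X\rangle\rangle \to A$ — well-defined by completeness — whose induced map on associated gradeds is the identity $k\langle Y\rangle \to G = k\langle Y\rangle$. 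The standard argument (injectivity via $\bigcap_n A_{(n)} = 0$, surjectivity by successive approximation in complete $A$) then upgrades $\varphi$ to an isomorphism, exhibiting $A$ as a free formal power series algebra on $X$. The main obstacle is precisely the first step of this direction: the careful index-bookkeeping across filtration levels, ensuring that lifts of homogeneous relations in $G$ produce \emph{inverse} $\nu$-dependences in $A$ at exactly the filtration level where the inverse weak algorithm applies.
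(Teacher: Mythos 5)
The paper does not actually prove this proposition: it is quoted from \cite[Proposition 2.9.8]{chn-fr}, so there is no internal argument to measure yours against. That said, your route --- translating the inverse weak algorithm on $A$ into the forward weak algorithm on the associated graded algebra $G=\mathrm{gr}(A)$, applying \Cref{pr.chn} there, and lifting a free generating set back to $A$ via completeness --- is the standard one (and structurally the one the cited source takes), and the level-by-level dictionary you describe (restrict to indices attaining the extremal value of $\nu(a_i)+\nu(b_i)$, pass to leading terms, lift coefficients back at the matching filtration level) is correct in both directions.

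Two points need more care before the argument is complete. First, in the ``if'' direction you verify the weak algorithm for $G$ only on \emph{homogeneous} families, whereas \Cref{pr.chn} requires it for arbitrary families; you need the (routine, but not free) reduction: given an arbitrary $\nu_G$-dependent family, pass to the top homogeneous components of the members attaining $\max_i(\nu_G(a_i)+\nu_G(b_i))$, obtain a homogeneous dependence, and then check that the homogeneous dependence relation produced for some $\overline{a_{i_0}}$ descends to a $\nu_G$-dependence of $a_{i_0}$ itself, the error terms all having strictly smaller degree. Second, \Cref{pr.chn} delivers a free generating set $Y$ of positive \emph{filtration} degree, not a homogeneous one, and your lifting step genuinely needs homogeneity of $Y$: a non-homogeneous generator has no single ``matching filtration level,'' and without homogeneity the induced map on associated gradeds is not the identification you claim. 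This is fixable --- a connected graded algebra satisfying the weak algorithm is free on any homogeneous basis of $G_{>0}/G_{>0}^2$, by the same argument as in \Cref{th:gr-fr} --- but it must be said. With these two repairs, and reading ``formal power series algebra on $X$'' as the completion of $k\langle X\rangle$ so that convergence of $\varphi$ on infinite $X$ makes sense, your proof is sound.
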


\section{Associative algebras}
\label{se.main}

\subsection{The graded case}
\label{subse.gr}

The setup is as follows. We fix a set $S$, and consider the free $k$-algebra $k\langle S\rangle$ on $S$. For an arbitrary finite subset $F\subset S$ consider the surjection $k\langle S\rangle\to k\langle F\rangle$ obtained by annihilating all generators in $S\setminus F$. These surjections form a co-filtered diagram in the category of graded algebras, and we can consider its limit
\begin{equation}\label{eq:diag}
  A=A((S)):=\varprojlim_F \left(k\langle S\rangle \to k\langle F\rangle\right). 
\end{equation}
The degree-$d$ component $A_d$ consists of formal $k$-linear combinations of the degree-$d$ monomials in the generators $x_s$, $s\in S$. 

There is a canonical morphism $k\langle S\rangle\to A((S))$ that is evidently an isomorphism when $S$ is finite and only one-to-one when $S$ is infinite. In the latter case however, the algebra $A((S))$ is still free. This is the content of the main result of this section, which is a non-commutative analogue of \cite[Theorem 1.1]{ess2} (and of its precursor, \cite[Theorem 1.2]{ess}).

\begin{theorem}\label{th:gr-fr}
  $A=A((S))$ defined by \Cref{eq:diag} is free as a $k$-algebra, on any set $X$ of homogeneous elements of $A$ forming a basis for $A_{>0}/A_{>0}^2$ is a free generating set for $A$.
\end{theorem}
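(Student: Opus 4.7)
The strategy is to verify \Cref{def.wk} for $A$, viewed as a graded connected $k$-algebra; this, combined with \Cref{pr.chn}, will yield that $A$ is free on a set of positive-degree homogeneous elements, after which a graded-Nakayama-style argument (again invoking the weak algorithm) will upgrade this to the sharper statement claimed.

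By construction of the limit, each $A_d$ is canonically identified with the space $k^{W_d}$ of all $k$-valued functions on the set $W_d$ of degree-$d$ words over $S$; write $\lambda_w(a)$ for the coefficient of $w \in W_d$ in $a \in A_d$. Multiplication is dictated by concatenation: for $a \in A_d$, $b \in A_e$ and $w \in W_{d+e}$ one has $\lambda_w(ab) = \lambda_{w_1}(a)\lambda_{w_2}(b)$, where $w = w_1 w_2$ is the unique split with $|w_1| = d$. A direct coefficient inspection (pick words $u, v$ with $\lambda_u(a), \lambda_v(b) \ne 0$ and note $\lambda_{uv}(ab) \ne 0$) shows that $A$ is a graded integral domain. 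To verify the weak algorithm, I pass to top-degree components and reduce to the following claim in the homogeneous setting: given homogeneous $\alpha_1, \dots, \alpha_n$ with degrees $d_1 \le \cdots \le d_n$ and homogeneous $\beta_1, \dots, \beta_n$ with $\sum \alpha_i \beta_i = 0$ in degree $N$ and $\beta_n \ne 0$, one has $\alpha_n \in \sum_{i < n} \alpha_i A_{d_n - d_i}$ (the case $\beta_n = 0$ is handled by dropping $\alpha_n$ and inducting on $n$). Expanding the relation at $uv \in W_N$ with $u \in W_{d_n}$, $v \in W_{N-d_n}$, gives
\begin{equation*}
  \lambda_u(\alpha_n)\,\lambda_v(\beta_n) \;=\; -\sum_{i<n} \lambda_{u|_{[1,d_i]}}(\alpha_i)\,\lambda_{u|_{[d_i+1,d_n]}\,v}(\beta_i),
\end{equation*}
and upon fixing a pivot $v_0 \in W_{N-d_n}$ with $\lambda_{v_0}(\beta_n) \ne 0$, I \emph{define} $\gamma_i \in A_{d_n - d_i}$ by $\lambda_t(\gamma_i) := -\lambda_{t v_0}(\beta_i)/\lambda_{v_0}(\beta_n)$ for $t \in W_{d_n - d_i}$. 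A direct coefficient check applying the displayed identity at $v = v_0$ then yields $\alpha_n = \sum_{i<n} \alpha_i \gamma_i$.

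\Cref{pr.chn} then gives $A \cong k\langle Y\rangle$ for some homogeneous $Y \subset A_{>0}$. To obtain the sharper form in the statement, let $X \subset A_{>0}$ be any homogeneous set lifting a basis of $A_{>0}/A_{>0}^2$, and consider the graded map $\phi\colon k\langle X\rangle \to A$ induced by the inclusion. Surjectivity of $\phi$ is the standard degree induction using $A_d = k X_d + \sum_{i+j=d,\; i,j \ge 1} A_i A_j$. For injectivity, I induct on degree: a minimum-degree nonzero homogeneous $r \in \ker\phi$ of degree $d$ admits the unique left-derivative expansion $r = \sum_{x \in X} x u_x$ in the free algebra $k\langle X\rangle$, whence $\sum_x x\phi(u_x) = 0$ in $A$. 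Since $A$ is a domain and, by the inductive hypothesis, $\phi(u_x) \ne 0$ whenever $u_x \ne 0$, this exhibits $\{x : u_x \ne 0\}$ as a $\nu$-dependent family with top $\nu$-degree $d$. The weak algorithm forces some $x_0$ to satisfy $x_0 = \sum_{x \ne x_0,\; \deg x \le \deg x_0} x \delta_x$ in $A$, for homogeneous $\delta_x$ of degree $\deg x_0 - \deg x$; projecting to $A_{>0}/A_{>0}^2$ kills the summands with $\deg x < \deg x_0$ and reduces the rest to a $k$-linear relation expressing $\bar x_0$ in terms of distinct other basis elements $\bar x$, contradicting that $\{\bar x : x \in X\}$ is a basis.

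The principal obstacle is the weak-algorithm verification: one needs to turn the infinite system of coefficient equations parameterized by $(u,v) \in W_{d_n} \times W_{N-d_n}$ into a single algebra equation $\alpha_n = \sum \alpha_i \gamma_i$. The device of pivoting on a single nonzero coefficient $\lambda_{v_0}(\beta_n)$ makes this possible, and the remainder of the argument is essentially formal.
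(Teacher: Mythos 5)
Your proposal is correct, but both halves of it run along genuinely different lines from the paper's proof, so a comparison is worthwhile. For the weak-algorithm verification, the paper fixes a generator $x_s$ occurring as the rightmost letter of some monomial of $b_n$, restricts the relation $\sum a_ib_i=0$ to the terms ending in $x_s$, cancels $x_s$ on the right, and closes a double induction on $n$ and $|b_n|$; you instead exploit the concrete model $A_d\cong k^{W_d}$ and produce the dependence witnesses $\gamma_i$ in closed form by pivoting on a single word $v_0$ with $\lambda_{v_0}(\beta_n)\ne 0$. Your computation is sound: the coefficient identity at $uv_0$ gives $\lambda_u\bigl(\sum_{i<n}\alpha_i\gamma_i\bigr)=\lambda_u(\alpha_n)$ for every $u$, and defining $\gamma_i$ by an arbitrary coefficient assignment is legitimate precisely because $A_{d_n-d_i}$ is the \emph{full} product of coefficient lines --- this is the one place the structure of the limit enters, and your argument makes that visible where the paper's induction keeps it implicit; the trade-off is that your route is tied to the explicit word model, while the paper's stripping argument stays closer to Cohn's general machinery. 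For the second half, the paper merely verifies the two hypotheses of \cite[\S 2.4, Theorem 4.1]{chn-fr} (the monomials on $X$ span $A$, and no element of $X$ is right $\nu$-dependent on the rest, the latter by the same reduction modulo $A_{>0}^2$ that you use), whereas you reprove the needed implication from scratch via a minimal-degree homogeneous element of $\ker\phi$ and the left-derivative expansion $r=\sum_x xu_x$; this makes your write-up self-contained (your appeal to \Cref{pr.chn} to get some free generating set $Y$ is in fact never used afterwards and could be dropped), at the cost of redoing a result the paper outsources. One point you should make explicit in the reduction to the homogeneous case: after passing to top components only the indices attaining $\max_i(\nu(a_i)+\nu(b_i))$ survive, and the element your claim shows to be dependent is the one of largest $\nu$ \emph{among those surviving indices}; since the elements it depends on all have $\nu$ no larger, this still verifies \Cref{def.wk} as stated, but the sentence ``the case $\beta_n=0$ is handled by dropping $\alpha_n$'' is where that bookkeeping hides and deserves a line of justification.
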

\begin{proof}
  We use \cite[\S 2.4, Theorem 4.1]{chn-fr}. To apply it, we have to prove that
  \begin{enumerate}[(a)]
  \item\label{item:1} $A$ satisfies the weak algorithm with respect to its grading;
  \item\label{item:2} the monomials on any set $X$ as in the statement span $A$,
  \item\label{item:3} and no element of $X$ is right-$\nu$-dependent on the rest.
  \end{enumerate}
  We handle these in turn.

  {\bf \Cref{item:1}: $A$ satisfies the weak algorithm.} Suppose $a_i$, $1\le i\le n$ form a $\nu$-dependent family of non-zero homogeneous elements, ordered so that
  \begin{equation*}
    |a_1|\le \cdots\le |a_n|.
  \end{equation*}
  We will argue that one of the $a_i$ is $\nu$-dependent on $(a_j)_{j=1}^{i-1}$.

  By homogeneity, the hypothesis proves the existence of (homogeneous) $b_i$ such that $\sum a_ib_i=0$; we then need to show that some $a_i$ is a right linear combination of $a_j$, $j<i$. We may as well assume all $a_ib_i$ have the same degree, so that
  \begin{equation*}
    |b_1|\ge \cdots\ge |b_n|. 
  \end{equation*}
  We prove the statement by double induction on $n$ and then $|b_n|$, the base case being a simple exercise.

  If $b_n=0$ then we may as well restrict attention to the family $(a_i)_{i=1}^{n-1}$, the inductive hypothesis taking care of the rest. We can thus assume that $b_n\ne 0$, moving over to the induction-by-$|b_n|$ branch of the argument. Once more, the base case $b_n\in k^\times$ is immediate (as there is then nothing to prove), so we in fact assume $|b_n|>0$. 

  Fix some $s$ such that $x_s$ appears as the rightmost generator in one of the monomials making up $b_n\ne 0$. In the relation $\sum a_ib_i=0$ the terms ending in $x_s$ still add up to zero, so we can ignore the summands of the $b_i$s that do {\it not} end in a rightmost $x_s$ and assume all $b_i$ belong to $Ax_s$. But then, if say $b_i=b_i'x_s$, we have
  \begin{equation*}
    \sum a_ib_i'=0,
  \end{equation*}
  allowing us to apply the inductive hypothesis to $|b'_n|<|b_n|$.

  \vspace{.5cm}

{\bf \Cref{item:2}: The monomials on $X$ span $A$.} This is virtually automatic given the definition of $X$. Let $a\in A_d$ be a homogeneous element. It is then of the form
\begin{equation*}
  \sum t_ix_i + a',\ t_i\in k,\ x_i\in X
\end{equation*}
for some $a'\in A_{>0}^2$, and we can use induction on $d$ applied to the factors $b_j,c_j\in A_{>0}$ in a decomposition
\begin{equation*}
  a'=\sum_j b_jc_j. 
\end{equation*}
    
\vspace{.5cm}

{\bf \Cref{item:3}: No element of $X$ is right $\nu$-dependent on the rest.} because our elements are homogeneous, a $\nu$-dependence relation would be of the form 
\begin{equation*}
  x=\sum x_ia_i,\ a_i\in A
\end{equation*}
for distinct elements $x,x_i\in X$. Modulo $A_{>0}^2$ this expresses $x$ as a $k$-linear combination of other elements of $X$, contradicting the fact that $X$ is a basis of $A_{>0}/A_{>0}^2$.
\end{proof}

\subsection{The ungraded case}
\label{subse.ungr}

The preceding discussion makes it natural to examine the structure of the limit \Cref{eq:diag} in the category of (plain, ungraded) $k$-algebras. We will denote the resulting algebra by $B=B(S)$, to distinguish it from the limit $A=A((S))$ in the category of graded algebras.

We have an inclusion $A((S))\subset B(S)$. In fact, the generic element of $B$ is a formal series of the form
\begin{equation*}
  x=\sum_{d\ge 0}x_d,\ x_d\in A_d
\end{equation*}
with the property that for every finite subset $F\subset S$, the free algebra $k\langle F\rangle\subset A\subset B$ contains only finitely many of the monomials appearing in $x$.

$B$ admits a natural inverse filtration $\nu$ defined by
\begin{equation*}
  x=\sum_{d\ge 0}x_d\in B_{(n)} \iff x_d=0\text{ for }d<n. 
\end{equation*}
In other words,
\begin{equation*}
  \nu(x) = \text{ smallest }d\text{ such that }x_d\ne 0. 
\end{equation*}
We denote
\begin{equation*}
  K=B_{(0)},\ B_+ = \{x\in B\ |\ \nu(x)>0\}. 
\end{equation*}
Note that $K$ is a division ring. 

We then have the following analogue of \Cref{th:gr-fr}. 

\begin{theorem}\label{th.ungr-fr}
  The algebra $B=B(S)$ is isomorphic to a formal power series $k$-algebra. 
\end{theorem}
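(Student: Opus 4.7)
The plan is to apply \Cref{pr.chn-inv}, which reduces the theorem to three properties of $B$ under its inverse filtration $\nu$: completeness, the identification $B/B_{(1)}\cong k$, and the inverse weak algorithm. The first two are essentially built into the construction: any $\nu$-Cauchy sequence in $B$ stabilizes in each degree and so converges to a well-defined formal series in $B$, while $B_{(1)}$ is visibly the kernel of the projection onto the degree-$0$ part $A_0=k$.

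The substance of the argument lies in the inverse weak algorithm, and the strategy I would follow is to reduce it to the (forward) weak algorithm for $A=A((S))$ already established in part \Cref{item:1} of the proof of \Cref{th:gr-fr}. Given a $\nu$-dependent family of nonzero elements $(a_i)$ in $B$ with witnesses $(b_i)$, I would set $d_i:=\nu(a_i)$ and $e_i:=\nu(b_i)$, and decompose $a_i=\widetilde a_i+r_i$ into its leading homogeneous component $\widetilde a_i\in A_{d_i}$ (nonzero by definition of $d_i$) and higher-order tail $r_i$ with $\nu(r_i)>d_i$, and similarly $b_i=\widetilde b_i+s_i$. Letting $m:=\min_i(d_i+e_i)$ and $I:=\{i:d_i+e_i=m\}$, the degree-$m$ component of $\sum a_ib_i$ must vanish, giving the graded identity $\sum_{i\in I}\widetilde a_i\widetilde b_i=0$ in $A$ and so exhibiting $(\widetilde a_i)_{i\in I}$ as a $\nu$-dependent family of nonzero homogeneous elements.

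The weak algorithm for $A$ then supplies some $j\in I$ and homogeneous coefficients $c_i\in A_{d_j-d_i}$ (for those $i\in I$ with $d_i\le d_j$) satisfying $\widetilde a_j=\sum\widetilde a_ic_i$. Viewing the $c_i$ as elements of $B$ via the inclusion $A\hookrightarrow B$, the element $a_j-\sum a_ic_i$ has vanishing degree-$d_j$ term and hence $\nu>d_j$, while $\nu(a_i)+\nu(c_i)=d_i+(d_j-d_i)=d_j=\nu(a_j)$ for each contributing $i$. This is exactly the right $\nu$-dependence of $a_j$ on $\{a_i:\nu(a_i)\le\nu(a_j)\}$ required by \Cref{def.inv-wk}, and \Cref{pr.chn-inv} then identifies $B$ with a formal power series algebra.

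The main obstacle I anticipate is purely bookkeeping: making precise the extraction of leading terms so that a $\nu$-dependence in $B$ correctly descends to a homogeneous $\nu$-dependence in $A$ on the right index set $I$. Once that reduction is cleanly in place, the lifting step is essentially automatic, since each $c_i\in A\subset B$ already carries its own degree under $\nu$ and so the inverse-filtration estimates are matched by the graded ones without further work.
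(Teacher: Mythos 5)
Your treatment of the inverse weak algorithm is correct but follows a genuinely different route from the paper's. The paper works entirely inside $B$: after normalizing so that the quantities $\nu(a_i)+\nu(b_i)$ all coincide and $\nu(a_i)$ is non-decreasing, it inducts on $\nu(b_n)$, stripping a rightmost generator $x_t$ off the lowest-degree part of $b_n$ exactly as in step \Cref{item:1} of the proof of \Cref{th:gr-fr}, until it reaches the base case $\nu(b_n)=0$, which it settles by invoking invertibility of $b_n$. You instead pass to leading (lowest-degree) homogeneous components: the degree-$m$ part of $\sum a_ib_i$ with $m=\min_i(\nu(a_i)+\nu(b_i))$ gives a genuine relation $\sum_{i\in I}\widetilde a_i\widetilde b_i=0$ among nonzero homogeneous elements of $A$, the already-established weak algorithm for $A$ produces homogeneous coefficients, and these lift back to a $\nu$-dependence in $B$ because the degree bookkeeping matches on the nose. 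This is in effect the general principle that the inverse weak algorithm is inherited from the weak algorithm of the associated graded ring (cf.\ \cite[\S 2.9]{chn-fr}); it buys a shorter argument that reuses the graded case verbatim instead of re-running the rightmost-generator induction, and it sidesteps the paper's appeal to invertibility of elements of $\nu$-value $0$.

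One caveat applies to your outline and to the paper's proof alike: \Cref{pr.chn-inv} requires $B$ to be complete, and your justification (a Cauchy sequence ``converges to a well-defined formal series in $B$'') does not address whether that limit series actually lies in $B$, i.e.\ whether its image in each $k\langle F\rangle$ is still a polynomial. The partial sums of $\sum_{d\ge 0}x_s^d$ for a fixed $s\in S$ already form a $\nu$-Cauchy sequence whose formal limit violates this finiteness condition, so completeness is not the formality you (or the paper, which asserts it is ``clear'') make it out to be; the same issue underlies the paper's claim that elements with $\nu(b)=0$ are invertible. You are no worse off than the source here, but this is the one step of your plan that is not ``essentially built into the construction'' and would need genuine attention.
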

\begin{proof}
  According to \Cref{pr.chn-inv} it suffices to argue that $B$ has the weak algorithm for its inverse filtration $\nu$ (as it is clear that $B$ is $\nu$-complete). Suppose , then, that we have
  \begin{equation*}
    \nu\left(\sum a_ib_i\right) > \min_i(\nu(a_i)+\nu(b_i))
  \end{equation*}
  for non-zero $a_i$ and $b_i$, $1\le i\le n$. We may as well assume that all $\min_i(\nu(a_i)+\nu(b_i))$ are equal and that $\nu(a_i)$ is non-decreasing in $i$.

  If $\nu(b_n)=0$ then $b_n$ is invertible, and hence $a_n$ is a right $B$-linear combination of the other $a_i$ modulo $B_{(\nu(a_n)+1)}$, proving $\nu$-dependence.
  
  Otherwise, the lowest-degree term of $b_n$ is a $k$-linear combination of monomials in the $x_s$, $s\in S$ of degree $\ge 1$. Let $x_t$ be the rightmost generator in one of these monomials. Denoting by a $t$ superscript the linear combination of those monomials that end in $x_t$, we then have
  \begin{equation*}
    \nu\left(\sum a_ib_i^t\right) > \min_i(\nu(a_i)+\nu(b_i^t)). 
  \end{equation*}
  Since all $b_i^t$ are of the form $b'_i x_t$, we can eliminate $x_t$ to obtain
  \begin{equation*}
    \nu\left(\sum a_ib'_i\right) > \min_i(\nu(a_i)+\nu(b'_i)). 
  \end{equation*}
  This is a $\nu$-dependence relation as before with $\nu(b'_n)<\nu(b_n)$, so we can repeat the procedure until the dependence relation has been reduced to the case $\nu(b_n)=0$.
\end{proof}

\section{Lie algebras}\label{se:lie}

The goal here is to prove an Lie-algebra analogue of \Cref{th:gr-fr}. Specifically, let $S$ be a set as before. If $S$ is finite then $L((S))$ will simply denote the free Lie algebra on $S$. On the other hand, if $S$ is infinite we set

\begin{equation}
  \label{eq:1}
  L=L((S)):=\varprojlim_F \left(L((S)) \to L(F)\right),
\end{equation}
precisely as in \Cref{eq:diag}: 

\begin{itemize}
\item the limit is in the category of {\it graded} Lie algebras;  
\item it is indexed by the filtered system of finite subsets $F\subseteq S$;
\item the connecting morphisms $L(F)\to L(F')$ for $F'\subset F$ simply annihilate all generators in $F\setminus F'$. 
\end{itemize}

Furthermore, we assume we are working over a base field $k$ of characteristic zero. The advertised result is

\begin{theorem}\label{th:lie-fr}
  Let $S$ be a set and $L=L((S))$ as in \Cref{eq:1}. Then, $L$ is freely generated as a Lie algebra by any set of homogeneous elements that projects to a basis of $L/[L,L]$.
\end{theorem}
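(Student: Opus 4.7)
The plan is to adapt the strategy of \Cref{th:gr-fr} by first embedding $L$ as a Lie subalgebra of $A = A((S))$---the inclusions $L(F)\hookrightarrow A(F) = k\langle F\rangle$ pass to the inverse limit, yielding a Lie embedding $L\hookrightarrow A$---and then exploiting the freeness of $A$ established in \Cref{th:gr-fr}. Given a homogeneous $X\subset L$ projecting to a basis of $L/[L,L]$, let $\mathcal{L}(X)$ denote the free Lie algebra on $X$ and $\varphi:\mathcal{L}(X)\to L$ the induced Lie morphism; the goal is to show $\varphi$ is bijective.

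Surjectivity of $\varphi$ is a routine connected-graded Nakayama argument: since $X$ spans $L/[L,L]$ and $[L,L]_d = \sum_{a+b=d,\,a,b\ge 1}[L_a,L_b]$ involves only strictly lower degrees, an induction on degree shows that $X$ generates $L$ as a Lie algebra.

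For injectivity, use PBW (characteristic zero) to embed $\mathcal{L}(X)\hookrightarrow U(\mathcal{L}(X)) = k\langle X\rangle$, and let $\psi:k\langle X\rangle\to A$ be the algebra map extending $X\hookrightarrow L\hookrightarrow A$. The composite $\mathcal{L}(X)\hookrightarrow k\langle X\rangle \xrightarrow{\psi} A$ coincides with $\mathcal{L}(X)\xrightarrow{\varphi} L \hookrightarrow A$; since $L\hookrightarrow A$ is injective, $\varphi$ will be injective as soon as $\psi$ is. By \Cref{th:gr-fr}, $\psi$ is injective provided the image of $X$ in $A_+/A_+^2$ is linearly independent: extending that image to a homogeneous basis $\tilde X$ of $A_+/A_+^2$ realizes $A\cong k\langle\tilde X\rangle \supseteq k\langle X\rangle$.

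The heart of the argument---and the main obstacle---is thus showing that $L\cap A_+^2 = [L,L]$, the inclusion $\supseteq$ being immediate from $[l_1,l_2] = l_1l_2 - l_2l_1$. For the reverse inclusion I would invoke the cocommutative graded Hopf structure inherited by $A$ (in a completed sense) from each $A(F) = k\langle F\rangle$ with $F$ primitive, under which $L = \mathrm{Prim}(A)$. The Eulerian idempotent $e_1 = \log^{\ast}(\mathrm{id}_A)$---defined via the convolution product and well-posed degree-by-degree in characteristic zero---projects $A$ onto its primitives, so $c = e_1(c)$ whenever $c\in L$. Dynkin's formula expresses $e_1(uv)$ for $u,v\in A_+$ as a sum of iterated Lie brackets, hence as an element of $[L,L]$; consequently any $c = \sum_i u_iv_i \in L\cap A_+^2$ satisfies $c = \sum_i e_1(u_iv_i) \in [L,L]$. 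The delicate point is justifying this Hopf-algebraic apparatus in the graded-limit setting, where the coproduct lands in a completed tensor product; this is handled degree-by-degree using the finite-dimensional structure at each level of each $A(F)$.
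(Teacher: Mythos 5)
Your architecture is genuinely different from the paper's (which proves freeness on \emph{some} homogeneous set via a limit version of Reutenauer's Lie-dependence machinery, \Cref{th:weak}, and then upgrades to \emph{any} relatively free homogeneous set by an automorphism argument, \Cref{pr:allhomog}), and your reductions are sound up to the last step: surjectivity is fine, and the deduction of injectivity of $\varphi$ from injectivity of $L/[L,L]\to A_+/A_+^2$ via \Cref{th:gr-fr} is correct. But the proof of the key claim $L\cap A_+^2\subseteq[L,L]$ has a genuine gap, and it is precisely the finite-versus-infinite-sum issue that makes the whole theorem nontrivial. For $u,v\in A_+$, Dynkin/Eulerian formulas express $e_1(uv)$ as an \emph{infinite} formal sum of iterated brackets, whereas $[L,L]$ consists by definition of \emph{finite} sums of brackets of elements of $L((S))$; in the limit algebra these are very different (this is the same phenomenon as $L((S))$ not being generated in degree one). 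Concretely, for distinct generators $a,y_1,z_1,y_2,z_2,\dots$ take $u=a$ and $v=\sum_i(y_iz_i+z_iy_i)\in A_2$; computing $e_1=J-\tfrac12 J*J+\tfrac13 J*J*J$ on degree $3$ gives $e_1(uv)=-\tfrac16\sum_i\bigl([y_i,[a,z_i]]+[z_i,[a,y_i]]\bigr)$, and this does \emph{not} lie in $[L,L]=[L_1,L_2]$: writing it as $\sum_{j=1}^N[p_j,q_j]$ and extracting the coefficients of the monomials $y_i a z_{i'}$ produces a matrix of rank at most $2N$, while the left-hand side produces $2I$ of infinite rank. So $e_1$ does not carry $A_+^2$ into $[L,L]$, and your proposed degree-by-degree, level-by-level justification can only yield $e_1(uv)\in\varprojlim_F[L(F),L(F)]$, which is strictly larger than $[L,L]$ (already in degree $2$ the former is all of $L_2$ while the latter is only the finite-rank antisymmetric part).

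The claim itself may still be true, but proving it requires using the primitivity of the whole element $c=\sum_j u_jv_j$ rather than applying $e_1$ to each product separately. For instance in degree $3$, the vanishing of the $S^3$-component of $c$ forces the symmetric parts of the degree-$2$ factors to be supported on $W\otimes A_1$ for the finite-dimensional space $W$ spanned by the degree-$1$ factors, after which the bracket expressions become finite sums and do land in $[L,L]$. That finiteness/rank step --- the analogue of what \Cref{pr:idlie} supplies in the paper's framework via the observation that only finitely many Lie monomials of a fixed multidegree exist, plus a cofinality argument --- is the missing heart of your proof, and carrying it out in all degrees (through all intermediate $S^p$-components) is a substantive argument, not a routine completion.
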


We will make use of the material in \cite{reut}, which is a good reference for free Lie algebras in general. Following that source, we reserve the notation $L(S)$ for the free Lie algebra on the (finite or infinite) set $S$. In particular, for finite $S$ we have $L((S))=L(S)$. As explained in \cite[\S 1.2]{reut}, the free Lie algebra $L(S)$ can be identified with the set of {\it Lie polynomials} in its enveloping algebra $k\langle S\rangle$. 

The proof given there for the celebrated theorem of Shirshov and Witt to the effect that Lie subalgebras of free Lie algebras are free \cite[Theorem 2.5]{reut} proceeds via a Lie-theoretic version of Cohen's theory of dependence for polynomials. The relevant appears in \cite[discussion preceding Theorem 2.3]{reut}:

\begin{definition}\label{def:lie-dep}
  Let $p,p_i\in k\langle S\rangle$, $1\le i\le n$ be non-commutative polynomials. We say that $p$ is {\it Lie-dependent} on the $p_i$ if $p=0$ or there is a Lie polynomial $f$ in $n$ variables such that
  \begin{itemize}
  \item $\deg(p-f(p_1,\cdots,p_n))<\deg p$;
  \item the degree of every monomial appearing formally in $f(p_i)$ is dominated by the degree of $p$.      
  \end{itemize}
\end{definition}

The main tool in the proof of the above-mentioned \cite[Theorem 2.5]{reut} is the following Lie analogue of Cohen's result \Cref{pr.chn} (see \cite[Theorem 2.3]{reut}). 

\begin{proposition}\label{pr:lie-dep}
  Let $p_i\in k\langle S\rangle$, $1\le i\le n$ be Lie polynomials. If the family is dependent then some $p_i$ is Lie-dependent on those $p_j$ of no-larger degree.
\end{proposition}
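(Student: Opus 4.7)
The plan is to adapt the weak-algorithm arguments of \Cref{th:gr-fr,th.ungr-fr} to the Lie setting. Since the top homogeneous component of a Lie polynomial is again a Lie polynomial, I would first reduce to the case that all $p_i$ are homogeneous, so that the dependence hypothesis produces an honest identity $\sum_i p_i b_i = 0$ in $k\langle S\rangle$ with $p_i$ homogeneous Lie polynomials and $b_i$ homogeneous associative polynomials, not all zero. Arranging the $p_i$ by non-decreasing degree, as in the proof of \Cref{th:gr-fr}, then singles out a preferred $p_m$ on which I aim to exhibit Lie-dependence.

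Next I would invoke the weak algorithm for the free associative algebra $k\langle S\rangle$ (Cohn, or equivalently the finite-$S$ case of \Cref{th:gr-fr}) to extract a right-$\nu$-dependence
\begin{equation*}
  p_m \;\equiv\; \sum_{j\,:\,|p_j|\le |p_m|} p_j c_j \pmod{\text{degree} < |p_m|}, \qquad c_j \in k\langle S\rangle.
\end{equation*}
The decisive step is then to upgrade the associative coefficients $c_j$ into an iterated Lie-bracket expression in the $p_k$. The tool of choice is the Dynkin--Specht--Wever projection $\pi \colon k\langle S\rangle_{\ge 1}\to L(S)$ given by left-iterated bracketing, which in characteristic zero acts on any homogeneous Lie polynomial $p$ as multiplication by $|p|$. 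Applying $\pi$ to the display kills the non-Lie noise and rewrites each $\pi(p_j c_j)$ as a Lie combination of $p_j$ with the individual letters of $c_j$. I would then strip off those letters one at a time via the rightmost-generator trick used in the proofs of \Cref{th:gr-fr,th.ungr-fr}, re-absorbing them as brackets by induction on the word length of $c_j$.

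The main obstacle I anticipate is exactly this translation step: forcing the associative coefficients $c_j$ to become a single Lie polynomial $f$ in the $p_k$, while keeping all intermediate monomial degrees bounded by $|p_m|$ as demanded by \Cref{def:lie-dep}. This is the point where the characteristic-zero assumption and the full strength of the Dynkin projection (or, equivalently, the description of $L(S)$ as the primitives of the shuffle coproduct on $k\langle S\rangle$) do genuine work: an arbitrary associative coefficient $c_j$ projects cleanly to a Lie expression in the $p_k$ only because the $p_k$ themselves are primitive, so their associative products behave predictably under $\pi$. Once this combinatorial bookkeeping is in place, the relation rearranges to $p_m = f(p_1,\ldots,p_n)$ modulo degree $< |p_m|$, which is the Lie-dependence sought.
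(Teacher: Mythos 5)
There is a genuine gap, and it sits exactly where you suspect it does. Your reduction to a homogeneous identity $\sum_i p_i b_i=0$ and the use of the weak algorithm to isolate a relation $p_m=\sum_{j\neq m} p_j c_j$ (exact, by homogeneity) are both fine and match what the paper does in \Cref{th:lie-is-dep}. But the proposed upgrade via the Dynkin--Specht--Wever map does not deliver what \Cref{def:lie-dep} requires. Writing $\rho$ for left-normed bracketing, the recursion $\rho(wx)=[\rho(w),x]$ gives
\begin{equation*}
  \rho(p_j c_j)\;=\;|p_j|\cdot\sum_{w}\lambda_{w}\,\bigl[\cdots\bigl[[p_j,x_{w_1}],x_{w_2}\bigr],\cdots,x_{w_e}\bigr],
\end{equation*}
i.e.\ iterated brackets of $p_j$ with the \emph{letters} of $c_j$ --- elements of $S$, not the other $p_k$'s. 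So after applying $\rho$ you have only re-expressed $d\,p_m$ as an element of $L(S)$, which was never in doubt; you have not placed it in the Lie subalgebra generated by $p_1,\dots,p_n$. The ``rightmost-generator stripping'' that works in \Cref{th:gr-fr} does not transfer: cancelling a rightmost letter $x_s$ replaces $p_m$ by the part of $p_m$ ending in $x_s$, which is neither $p_m$ nor a Lie element, so the induction does not close. The cancellation of all the foreign letter-brackets across the sum is precisely the nontrivial assertion, and asserting that ``the combinatorial bookkeeping is in place'' does not establish it.

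For comparison: the paper does not prove \Cref{pr:lie-dep} at all --- it quotes it as \cite[Theorem 2.3]{reut} --- and the hard kernel you are missing is exactly \cite[Lemma 2.4]{reut} (reproduced in the limit setting as \Cref{pr:idlie}): a homogeneous Lie element lying in the right ideal $\sum_i p_i\, k\langle S\rangle$ generated by homogeneous Lie elements is already a Lie polynomial in the $p_i$, with the required degree bounds. The standard proofs of that lemma go through the subalgebra generated by the $p_i$ viewed as the enveloping algebra $U(M)$ of the Lie subalgebra $M$ they generate, together with a PBW/Hopf-algebra argument identifying the primitives of $k\langle S\rangle$ contained in $U(M)^+k\langle S\rangle$ with $M$ itself; this is a genuinely different mechanism from the Dynkin projection. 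If you want a self-contained proof, you should prove that lemma first and then run your (correct) reduction on top of it.
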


In order to state our first result, note that we have a natural embedding
\begin{equation}\label{eq:2}
  L((S))\subset A((S))
\end{equation}
arising by taking the limit of the Lie-polynomial embeddings $L(F)\subset k\langle F\rangle$ over finite subsets $F\subset S$. We say that $L((S))$ consists of the {\it Lie elements} of $A((S))$. 

Our first goal will be to prove the following version of \Cref{pr:lie-dep}.

\begin{theorem}\label{th:lie-is-dep}
Let $S$ be a set and $p_i\in A((S))$, $1\le i\le n$ be Lie elements. If the family is dependent then some $p_i$ is Lie-dependent on those $p_j$ of no-larger degree.  
\end{theorem}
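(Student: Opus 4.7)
The strategy I would take is to reduce this infinite-alphabet statement to the finite-alphabet version \Cref{pr:lie-dep} via the projections $\pi_F : A((S)) \twoheadrightarrow k\langle F\rangle = A((F))$ for finite $F\subseteq S$. Passing to top-degree components (which preserves Lie character since $L((S))$ is graded, and which preserves the dependence relation in its top degree), I may assume the $p_i$ are homogeneous with $|p_1|\le\cdots\le|p_n|$, witnessed by a non-trivial homogeneous relation $\sum p_i b_i=0$. Fixing some $i_0$ with $b_{i_0}\ne 0$ and any monomial $m$ with non-zero coefficient in $p_{i_0}b_{i_0}$ (which exists because $A((S))$ is a domain, being free by \Cref{th:gr-fr}) and enlarging slightly, I obtain a finite $F_0\subset S$ such that for every finite $F\supseteq F_0$ each $\pi_F(p_i)$ is non-zero and the projected relation remains non-trivial in $k\langle F\rangle$.

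For every such $F$, \Cref{pr:lie-dep} applied in $k\langle F\rangle$ produces a non-empty set $J(F)\subseteq\{1,\dots,n\}$ of indices $j$ for which $\pi_F(p_j)$ is Lie-dependent on $\{\pi_F(p_l):l\ne j,\ |p_l|\le|p_j|\}$. A pigeonhole argument then singles out some $j^*$ with $\{F\supseteq F_0 : j^*\in J(F)\}$ cofinal in the directed poset of finite $F\supseteq F_0$: otherwise each $j$ would admit some $F_j\supseteq F_0$ beyond which $j$ never appears in $J(\cdot)$, and then $J(\bigcup_j F_j)$ would be empty, contradicting \Cref{pr:lie-dep}. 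Fix $j^*$ and set $d:=|p_{j^*}|$.

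Let $V$ denote the finite-dimensional space of Lie polynomials in the variables $y_l$ ($l\ne j^*$, $|p_l|\le d$) that are homogeneous of total weight $d$ under the grading $|y_l|:=|p_l|$, and for each finite $F\supseteq F_0$ let $\mathcal V_F\subseteq V$ consist of those $f$ with $f(\pi_F(p_l))=\pi_F(p_{j^*})$ in $k\langle F\rangle_d$. Each $\mathcal V_F$ is an affine subspace, non-empty because for any $F$ one can find $F'\supseteq F$ with $j^*\in J(F')$, and the Reutenauer witness $f_{F'}\in\mathcal V_{F'}$ projects into $\mathcal V_F$ under $k\langle F'\rangle\twoheadrightarrow k\langle F\rangle$ (Lie polynomials commute with algebra morphisms). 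The family $\{\mathcal V_F\}$ is moreover decreasing in $F$, so finite-dimensionality of $V$ forces the $\mathcal V_F$ to stabilize at a common non-empty affine subspace $\mathcal V_\infty\subseteq\bigcap_F\mathcal V_F$. Picking any $f\in\mathcal V_\infty$, for every monomial $m'$ of $A((S))_d$ I would take $F\supseteq F_0$ containing the generators of $m'$; the coefficient of $m'$ in $f(p_l)-p_{j^*}$ then equals its coefficient in $\pi_F(f(p_l)-p_{j^*})=0$, so $f(p_l)=p_{j^*}$ in $A((S))$, yielding the desired Lie-dependence.

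The main obstacle, I expect, is the two-stage extraction: first pinning down a uniform index $j^*$ across a cofinal subfamily of projections via pigeonhole, and then promoting the $F$-local Lie-dependence witnesses into a single Lie polynomial $f$ that works globally in $A((S))$. Both steps crucially exploit the finite-dimensionality of degree-$d$ Lie polynomials in finitely many bounded-degree variables, which is what powers both the pigeonhole and the descent to a stabilized affine subspace.
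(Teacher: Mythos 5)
Your proof is correct, and its skeleton---homogenize via top components, project to the finite-alphabet quotients $k\langle F\rangle$, apply the finite-alphabet result there, and glue the local witnesses using the finite-dimensionality of the space of weight-$d$ Lie polynomials in the relevant variables---is also the paper's. The decomposition differs in the middle, though. The paper first invokes the weak algorithm for $A((S))$ itself (established in the proof of \Cref{th:gr-fr}) to produce, globally, a single index $i$ with $p_i=\sum_j p_j b_j$; only the statement that a Lie element lying in the right ideal generated by Lie elements is a Lie polynomial in them (\Cref{pr:idlie}) then needs to be transported across the projections, and the gluing there is done by choosing each local witness $f_F$ minimally (so its coefficients are forced) and passing to a cofinal subfamily with common monomial support. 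You instead apply the full finite-alphabet result \Cref{pr:lie-dep} to every projection, which costs an extra pigeonhole step over the directed poset of finite subsets to pin down a uniform index $j^*$, and you replace the minimality trick by the stabilization of a decreasing family of non-empty affine subspaces $\mathcal{V}_F$ of a fixed finite-dimensional space---arguably a cleaner gluing, at the price of not isolating the reusable \Cref{pr:idlie}. Two minor points you should make explicit: the local witnesses must be replaced by their weight-$d$ homogeneous components before they can be claimed to lie in $V$ (harmless, since \Cref{def:lie-dep} bounds all monomial degrees of $f(p_i)$ by $d$ and the $p_l$ are homogeneous), and the domain property of $A((S))$ is not actually needed---placing the support of one monomial of $b_{i_0}$ and of each $p_i$ into $F_0$ already guarantees that every projected relation is a non-trivial dependence.
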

\begin{proof}
  As in the proof of \cite[Theorem 2.3]{reut}, it will be enough to argue that, upon denoting the top homogeneous component of an element $q\in A((S))$ by $\overline{q}$, some $\overline{p_i}$ is expressible as a Lie polynomial of $\overline{p_j}$ for $j\ne i$.

  To simplify the notation we assume from the start that the $p_i$ are homogeneous, as we well may: the hypothesis is that there are $b_i\in A((S))$, almost all zero, such that
  \begin{equation}\label{eq:smldeg}
    \deg \sum_i p_i b_i < \max_i \deg p_i b_i. 
  \end{equation}
  Retaining only the top-degree components of the $p_i$ and $b_i$ and only those indices $j$ such that
  \begin{equation*}
    \deg p_j b_j = \max_i \deg p_i b_i,
  \end{equation*}
  we obtain
  \begin{equation*}
    \sum_i \overline{p_i} b_i=0,
  \end{equation*}
  i.e. the top components $\overline{p_i}$ are dependent. We henceforth drop the overlines and assume all $p_i$ are homogeneous, satisfying 
  \begin{equation*}
    \sum_i p_i b_i=0
  \end{equation*}
  for some homogeneous $b_i$, almost but not all zero. The goal (sufficient, again as in the proof of \cite[Theorem 2.3]{reut}) will be to prove that some $p_i$ is a Lie polynomial in $p_j$, $j\ne i$. This follows from \Cref{pr:idlie} below.
\end{proof}

The following result is a version of \cite[Lemma 2.4]{reut}.

\begin{lemma}\label{pr:idlie}
  Let $S$ be a set and $p$, $p_i\in A((S))$, $1\le i\le n$ be homogeneous Lie elements with
  \begin{equation}\label{eq:lindep}
    p = \sum_i p_i b_i
  \end{equation}
  for homogeneous $b_i\in A((S))$. Then,
  \begin{equation}\label{eq:pisliepoly}
    p = f(p_1,\cdots,p_n)
  \end{equation}
  for some Lie polynomial $f$.
\end{lemma}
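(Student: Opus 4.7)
The plan is a finite-variable approximation. For each finite $F \subseteq S$, the natural projection $\pi_F \colon A((S)) \to k\langle F\rangle$ sends \Cref{eq:lindep} to a relation of the same form in $k\langle F\rangle$, and in this finite setting $\pi_F(p)$ and the $\pi_F(p_i)$ remain (homogeneous) Lie polynomials by the very definition of the embedding $L((S)) \subset A((S))$ recorded in \Cref{eq:2}. The corresponding statement for finitely many generators in the uncompleted free algebra, i.e.\ \cite[Lemma 2.4]{reut}, then produces, for each $F$, a Lie polynomial in the $\pi_F(p_i)$ representing $\pi_F(p)$. The remaining task is to glue these local witnesses into a single, globally valid Lie polynomial.

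To do so, set $d := |p|$ and $d_i := |p_i|$, and let $V$ be the $k$-vector space of Lie polynomials in $n$ formal variables $y_1, \dots, y_n$ (with $y_i$ assigned weight $d_i$) that are homogeneous of total weight $d$. Crucially, $V$ is finite-dimensional, as it sits inside the weight-$d$ piece of the free associative algebra $k\langle y_1, \dots, y_n\rangle$. Evaluation at $(p_1, \dots, p_n)$ defines a $k$-linear map $\mathrm{ev}\colon V \to A_d$ by $\mathrm{ev}(f) := f(p_1, \dots, p_n)$, and the goal reduces to showing $p \in \mathrm{ev}(V)$. For each finite $F \subset S$, the cited finite-variable lemma (together with homogeneity, which lets us keep only the weight-$d$ part of any witness) provides that
\[
  S_F \ := \ \bigl\{\, f \in V \;\bigm|\; \pi_F(\mathrm{ev}(f)) = \pi_F(p) \,\bigr\}
\]
is a non-empty affine subspace of $V$. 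The family $\{S_F\}$ is downward filtered: $S_{F'} \subseteq S_F$ whenever $F \subseteq F'$, since $\pi_F$ factors through $\pi_{F'}$ in that case.

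In the finite-dimensional ambient space $V$, any downward-filtered family of non-empty affine subspaces has non-empty intersection. Indeed, pick $F_0$ minimizing the integer-valued dimension $\dim S_F$; for every $F \supseteq F_0$ the containment $S_F \subseteq S_{F_0}$ together with equal dimensions forces $S_F = S_{F_0}$, and for an arbitrary finite $F$ one applies this with $F \cup F_0$ to obtain $S_{F_0} = S_{F \cup F_0} \subseteq S_F$. Any $f \in \bigcap_F S_F$ then satisfies $\pi_F(\mathrm{ev}(f) - p) = 0$ for every finite $F$, hence $\mathrm{ev}(f) = p$ in $A((S)) = \varprojlim_F k\langle F\rangle$, producing the desired Lie polynomial expression.

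The main obstacle, such as it is, is the appeal to \cite[Lemma 2.4]{reut}: one must confirm that that result genuinely converts a right-module relation $\pi_F(p) = \sum_i \pi_F(p_i)\,\pi_F(b_i)$ among homogeneous Lie polynomials in $k\langle F\rangle$ into an honest Lie-polynomial representative of the prescribed weighted degree. Once that is taken as given, everything downstream---preservation of Lie elements under the $\pi_F$, the descending-filter property of $\{S_F\}$, and the finite-dimensional stabilization---is routine.
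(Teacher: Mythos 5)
Your proof is correct and follows essentially the same route as the paper: project to finite $F$, invoke Reutenauer's finite-variable result, and exploit the finite-dimensionality of the space of weight-$d$ Lie polynomials in $y_1,\dots,y_n$ to extract a single global witness. The only difference is in the gluing step: where the paper stabilizes a cofinal family of \emph{minimal} witnesses $f_F$ by pigeonholing on their finitely many possible monomial supports, you stabilize the descending family of non-empty affine solution sets $S_F$ by minimizing dimension---a slightly cleaner way to run the same finiteness argument.
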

\begin{proof}
  For finite $S$ the argument appears in the course of the proof of \cite[Theorem 2.3]{reut}, so we are concerned with infinite $S$. For finite subsets $F\subset S$ we denote the images through the surjection $A((S))\to k\langle F\rangle$ with superscripts $F$, as in $p^F$, $p_i^F$, etc.

  We assume throughout the discussion that the finite subsets $F$ are large enough to ensure that the projected elements $p^F$, $p_i^F$ and $b_i^F$ have the same degrees as their global counterparts $p$, $p_i$, etc. The relation \Cref{eq:lindep} projects to analogues
  \begin{equation*}
    p^F = \sum_i p^F_i b^F_i\in k\langle F\rangle, 
  \end{equation*}
  and hence, according to the aforementioned proof of \cite[Theorem 2.3]{reut}, for each $F$ we have an expression
  \begin{equation*}
    p^F = f_F(p_1^F,\cdots,p_n^F)
  \end{equation*}
  for some Lie polynomial $f_F$. We will furthermore assume $f_F$ is chosen {\it minimally}, in the sense that its Lie monomials, evaluated at the $p_i^F$, produce linearly independent elements of $k\langle F\rangle$. 

  If $d$ and $d_i$ denote the degrees of the homogeneous elements $p$ and $p_i$ respectively, then every Lie monomial appearing in $f_F$ has some degree $e_i$ in $x_i$ so that
  \begin{equation*}
    d = \sum_i d_i e_i. 
  \end{equation*}
  There are only finitely many choices of such monomials, so only finitely many monomials appearing in all $f_F$ collectively. This means that there is some {\it cofinal} collection $F_{\alpha}$ of $F$ (i.e. such that every finite subset $F\subset S$ is contained in some $F_{\alpha}$) for which
  \begin{itemize}
  \item the Lie monomials appearing in $f_{F_{\alpha}}$ are the same for all $\alpha$;
  \item by minimality, the coefficients of those monomials are the same for all $\alpha$. 
  \end{itemize}
  In other words, all Lie polynomials $f_{F_{\alpha}}$ coincide with some common Lie polynomial $f$. But this means that
  \begin{equation*}
    p^{F_{\alpha}} = f(p_1^{F_{\alpha}},\cdots,p_n^{F_{\alpha}}),\ \forall \alpha,
  \end{equation*}
  and hence we obtain the desired identity \Cref{eq:pisliepoly}.
\end{proof}
We first prove a weak version of \Cref{th:lie-fr}.
\begin{theorem}
  \label{th:weak}
  Let $S$ be a set and $L=L((S))$ as in \Cref{eq:1}. Then, $L$ is freely generated as a Lie algebra by some set of homogeneous elements.
\end{theorem}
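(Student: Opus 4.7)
The plan is to mimic Reutenauer's proof of the Shirshov--Witt theorem (\cite[Theorem 2.5]{reut}), using our \Cref{th:lie-is-dep} as the Lie weak algorithm in place of \cite[Theorem 2.3]{reut}. The proposed free generating set $X = \bigsqcup_{d\geq 1} X_d \subseteq L$ is the natural one: for each degree $d\geq 1$, choose $X_d \subset L_d$ so that its image in the graded piece $L_d/[L,L]_d$ is a $k$-basis. Because $L = L((S))$ is graded and connected this is well-defined, and the resulting $X$ is the Lie analogue of the generating set used in the proof of \Cref{th:gr-fr}.

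That $X$ generates $L$ as a Lie algebra follows by a standard induction on degree: modulo $[L,L]_d$ the space $L_d$ is spanned by $X_d$, and $[L,L]_d$ is spanned by brackets of homogeneous elements of smaller degree, to which the inductive hypothesis applies.

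The main work is to prove freeness. Suppose toward contradiction that a finite subfamily $x_1,\ldots,x_n \in X$ satisfies a non-trivial Lie-polynomial relation $f(x_1,\ldots,x_n)=0$ in $L \subseteq A((S))$. Because the Lie-to-associative embedding is injective on Lie polynomials, the associative expansion of $f$ is a non-zero associative polynomial; choosing $f$ of minimal total degree and grouping its terms by leftmost variable produces a non-trivial right $\nu$-dependence $\sum_i x_i b_i = 0$ in the sense of \Cref{def.dep}. After passing to top homogeneous components as in the proof of \Cref{th:lie-is-dep}, we may assume the $x_i$ and $b_i$ are homogeneous. \Cref{th:lie-is-dep} then produces some $x_{i_0}$, say of degree $d$, that is Lie-dependent on the remaining $x_j$ of degree $\leq d$. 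The degree-$d$ homogeneous component of the witnessing Lie polynomial decomposes into terms that are either scalar multiples of some $x_j \in X_d$ with $j\neq i_0$, or brackets of length $\geq 2$ lying in $[L,L]$. Reducing modulo $[L,L]$ then yields a non-trivial linear relation among elements of $X_d$, contradicting our choice.

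The anticipated obstacle is the step of extracting a genuine right $\nu$-dependence from an arbitrary non-trivial Lie-polynomial identity: a priori, the identity $F(x_1,\ldots,x_n)=0$ (where $F$ is the associative expansion of $f$) could involve terms that all vanish individually upon grouping by leftmost letter. The resolution is the minimality argument above together with the injectivity of $L(y_1,\ldots,y_n)\hookrightarrow k\langle y_1,\ldots,y_n\rangle$, which together guarantee that at least one $b_i$ survives as a non-zero element of $A((S))$. The remainder of the argument is a careful unpacking of what Lie-dependence on homogeneous elements of smaller-or-equal degree translates to modulo the commutator ideal.
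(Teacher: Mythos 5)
Your proposal follows essentially the same route as the paper: the same generating set (your $L_d/[L,L]_d$ agrees with the paper's $E_d/E_d'$ since $L$ is graded), generation by induction on degree, and freeness via a minimal vanishing relation, grouping by leftmost variable, an appeal to \Cref{th:lie-is-dep}, and reduction modulo $[L,L]$ against the choice of $X_d$. One small repair to the step you flag yourself: the minimality must be taken over all \emph{associative} polynomials in $k\langle y_1,\dots,y_n\rangle$ vanishing at the $x_i$ (as the paper does), not over Lie polynomials $f$, because the coefficients $b_i$ are values of associative polynomials $G_i$ of smaller degree, so only associative minimality forces some $b_i\ne 0$.
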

\begin{proof}
	$E_n$ is a subspace of $L$ defined by 
	\[
	E_n=\{p\in L\subset A((S))|\deg(p)\leq n\}
	\]
	Let $\langle E\rangle$ denote the Lie subalgebra generated by $E\subset L$.
	Let $E'_n$ be the subspace of $E_n$ defined by 
	\begin{displaymath}
  E_n':=E_n \cap \langle E_{n-1}\rangle
     \end{displaymath}
     Let $X_n$ be a subset of $E_n$ which defines a basis of $E_n/E'_n$.
Let $X:=\cup_{n\geq 1}X_n$. In order to show $L$ is free on $X$, it is enough to show $L$ is isomorphic with $L(B)$, where $B$ is a set with bijection $b\mapsto x_b$, $B\to X$. We only need to show (i) $X$ generates $L$. (ii) for each nonzero Lie polynomial $f(b)_{b\in B}\in L(B)$, one has $f(x_b)_{b\in B}\neq 0$\\
For (i)\\
Let $p$ with $\deg(p)=n$. We will prove for each $n$, $E_n$ is generated by $X$. When $n=0$, clearly it is true. Suppose when $n=k-1$, $E_{k-1}$ generated by $X$. Since $X_k$ is a basis of $E_k/E'_{k-1}$, so for each $p\in E_k$, we have $\overline{p}=\sum \alpha_x \overline{x}$, here $\overline{p}$ is projection of $p$ in $E_k/E'_k$, so we have $$Q=P-\sum_{x\in X_k} \alpha_x x \in E'_k\subset \langle E_{k-1}\rangle.$$
Hence, $Q$ is generated by $E_{k-1}$. By induction $E_{k-1}$ is generated by $X$. This shows $E_k$ is generated by $X$, hence $X$ generates $L$.\\
For (ii)\\
Arguing by contradiction. Suppose $f(p_1\dots p_q)=0$, for some nonzero Lie polynomial $f(b_1\dots b_q)\in L(B)$ and some $p_1\dots p_q \in X$ with $\deg(p_1)\leq \cdots \leq \deg(p_q)$. Certainly, there exists a nonzero polynomial in $K(B)$ such that $f(p_1\dots p_q)=0$, take such polynomial with the least degree, we write $f$ as 
$$f=\sum_{i=1}^q b_ig_i .$$
Some $r_i=g_i(p_1\dots p_q)$ is nonzero. Otherwise, suppose $g_i(p_1\dots p_q)=0$ for each $i$, at least one $g_i$ is nonzero polynomial so there is a polynomial $g_i$ satisfying $g_i(p_1\dots p_q)=0$, but its degree is less than $\deg(f)$, it contradict the minimality of $f$. Since $0=f(p_1\dots p_q)=\sum p_i r_i$, we deduce $p_1\dots p_q$ are dependent. By \Cref{th:lie-is-dep}, some polynomial $p_i$ is Lie dependent on $p_1\dots p_{i-1}$. Hence,$\deg(p_i-h(p_1\dots p_{i_1})<\deg(p_i)=n$, here $p_i-h(p_i \dots p_{i-1})\in E_{n-1}$, $h$ is a Lie polynomial. Hence, $p_i-h(p_1\dots p_{i-1})=\text{an element of } E_{n-1}$. This implies $p_i + \text{a linear combinations  of those } p_j, j<i \text{ of the same degree as } p_i=\text{a Lie expression which are of degree less than } p_i + \text{an element of } E_{n-1}$ with $n=\deg(p_i)$. So we have polynomials in $X_n$ are not linearly independent in $E_n/E'_n$, which is a contradiction. Therefore, this proposition is proved.
\end{proof}

\pf{th:lie-fr}
\begin{th:lie-fr}  
  Since \Cref{th:weak} shows that we can choose a set $X$ of {\it homogeneous} free generators for $L((S))$, the conclusion follows from \Cref{pr:allhomog} below.
\end{th:lie-fr}

\begin{proposition}\label{pr:allhomog}
  Let $L$ be a $\bZ_{>0}$-graded Lie algebra freely generated by some set of homogeneous elements. Then, $L$ is freely generated by {\it any} set of homogeneous elements projecting to a basis of $L/[L,L]$.
\end{proposition}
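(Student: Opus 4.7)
The plan is to show that the natural map $\phi\colon L(X) \to L$ from the free Lie algebra on the set $X$ to $L$ (sending each $x \in X$ to itself) is an isomorphism. I would split the argument into (easy) surjectivity and (substantive) injectivity. Surjectivity I would handle by induction on graded degree $d$: the base case $d = 1$ holds since $[L,L]_1 = 0$ makes $L_1 = (L/[L,L])_1$ spanned by $X_1$; for $d \geq 2$ and $p \in L_d$, the hypothesis gives $p \equiv \sum_{x \in X_d}\alpha_x x \pmod{[L,L]_d}$, and the remainder lies in $[L,L]_d$, which is a sum of brackets of strictly lower-degree elements handled by the inductive hypothesis. This essentially recapitulates the generation argument from the proof of \Cref{th:weak}.

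Injectivity is the main content, and I would obtain it by passing to universal enveloping algebras. Set $W := \operatorname{span}_k X$ and $V := L/[L,L]$ as $\bZ_{>0}$-graded vector spaces. Since $L$ is free, PBW gives canonical identifications $U(L) = T(V)$ and $U(L(X)) = T(W)$, and the composition $W \hookrightarrow L \twoheadrightarrow V$ is by hypothesis a graded isomorphism $\psi\colon W \to V$. The induced algebra map $U(\phi)\colon T(W) \to T(V)$ sends $W$ into $L \subset T(V)_+$, so it preserves the tensor-length filtration $F_m := (T(\cdot)_+)^m$. Using the canonical splitting $L = V \oplus [L,L]$ (valid since $L$ is free on $V$) and the inclusion $[L,L] \subset T(V)_+^2$ in $T(V)$, each $\phi(w)$ for $w \in W$ decomposes as $\phi(w) = \psi(w) + \delta(w)$ with $\delta(w) \in T(V)_+^2$. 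Expanding $\phi(w_1)\cdots\phi(w_m)$ then shows that, modulo $F_{m+1}$, the image of $w_1 \otimes \cdots \otimes w_m$ is $\psi(w_1) \otimes \cdots \otimes \psi(w_m)$, i.e., $\operatorname{gr}_m U(\phi) = \psi^{\otimes m}$ is an isomorphism. Since the length filtration on each graded component $T(V)_d$ is finite (at most $d$ nonzero steps), $U(\phi)$ is an isomorphism in each graded degree, hence overall. Restricting along the PBW embeddings $L(X) \hookrightarrow T(W)$ and $L \hookrightarrow T(V)$, under which $U(\phi)$ restricts to $\phi$, yields injectivity.

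The main obstacle is the associated-graded computation for $U(\phi)$. The crucial identification is that the abelianization $L \to L/[L,L]$ agrees with the composition $L \hookrightarrow T(V)_+ \twoheadrightarrow T(V)_+/T(V)_+^2 = V$ coming from the PBW embedding; this is what justifies $\phi(w) = \psi(w) + (\text{element of } T(V)_+^2)$ and drives the entire length-filtration argument. Once this identification is in hand, the rest is routine bookkeeping of tensor-length contributions in the expansion of $\phi(w_1)\cdots\phi(w_m)$.
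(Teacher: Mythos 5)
Your proposal is correct, but it follows a genuinely different route from the paper's. The paper argues entirely inside $L$: writing $F$ for a homogeneous free generating set, it constructs a graded automorphism of $L$ carrying $F$ to $X$ by recursion on the degrees $d_1<d_2<\cdots$, using at each stage that $\overline{F_{d_n}}$ and $\overline{X_{d_n}}$ span the same subspace of $L/[L,L]$ while all correction terms are brackets of strictly lower-degree generators; the recursion is only carried out explicitly through degree $d_2$ and then indicated. You instead show directly that $\phi\colon L(X)\to L$ is an isomorphism, getting surjectivity by the same induction on internal degree as in \Cref{th:weak}, and injectivity by passing to $U(\phi)\colon T(W)\to T(V)$ and computing its associated graded for the tensor-length filtration. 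Your pivotal identification --- that the abelianization $L\to L/[L,L]$ agrees with $L\hookrightarrow T(V)_+\twoheadrightarrow T(V)_+/T(V)_+^2$ under the identification $U(L)\cong T(V)$ coming from the splitting $L=\mathrm{span}(F)\oplus[L,L]$ --- is correct, and it is exactly what forces all cross terms in $\phi(w_1)\cdots\phi(w_m)$ into $T(V)_+^{m+1}$, so that $\mathrm{gr}_m U(\phi)=\psi^{\otimes m}$; the positivity of the grading then makes the length filtration finite in each internal degree, so that the isomorphism on associated graded objects lifts. What your approach buys is a complete, one-stroke injectivity argument with no degree-by-degree bookkeeping, and a transparent localization of where positivity of the grading enters (compare the paper's closing example, which your finiteness step would visibly fail on). What the paper's approach buys is that it never leaves the Lie algebra and explicitly exhibits the automorphism matching the two generating sets, at the cost of an induction that is only sketched. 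One small point of hygiene: you should note (as you implicitly do) that $X$ is automatically linearly independent in $L$ because it projects to a basis of $L/[L,L]$, so $W=\mathrm{span}_k(X)$ indeed has $X$ as a basis and $\psi$ is well defined.
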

\begin{proof}
  We abbreviate the phrase
  \begin{equation*}
    X \text{ projects to a basis of }L_{ab}:=L/[L,L].
  \end{equation*}
  to
  \begin{equation*}
    X \text{ is {\it relatively free}}.
  \end{equation*}
  Denote
  \begin{itemize}
  \item by $F$ a set of homogeneous elements generating $L$ freely (note that in particular $F$ is relatively free in the above sense);
  \item by $X$ an arbitrary relatively free set of homogeneous elements;
  \item by subscript degree decorations the respective homogeneous components of $F$ and $X$; for instance:
    \begin{equation*}
      X_d:=\{x\in X\ |\ |x| = d\};
    \end{equation*}
  \item by
    \begin{equation*}
      0<d_1<d_2<\cdots
    \end{equation*}
    the positive integers appearing as degrees of elements in $F\cup X$;
  \item by overlines (e.g. $\overline{X}$) the images of sets through the projection $L\to L_{ab}$. 
  \end{itemize}
  We will argue that there is an automorphism $\alpha$ of $L$ (as a graded Lie algebra) transforming $F$ into $X$; naturally, this will imply the desired conclusion.

  We define $\alpha$ on $F_{d_n}$, inductively on $n$. First, note that because $\overline{X_{d_1}}$ and $\overline{F_{d_1}}$ are bases for the same subspace of $L_{ab}$, there is a linear automorphism of $\mathrm{span}(F_{d_1})$ that transforms $F_{d_1}$ into $X_{d_1}$ modulo $[L,L]$. Furthermore, because $[L,L]$ is spanned by commutators of elements of $F$ and such commutators are all of degree
  \begin{equation*}
    \ge \min(d_2,2d_1) > d_1,
  \end{equation*}
  said automorphism must in fact transform $F_{d_1}$ into $X_{d_1}$.

  This would be the degree-$d_1$ component of $\alpha$; to simplify matters, we assume (as we can, by the argument in the preceding paragraph) that in fact $F_{d_1}=X_{d_1}$. This constitutes the base case in the recursive procedure we are outlining.

  Next, consider degree $d_2$. Each element of $X_{d_2}$ lies in
  \begin{equation*}
    \mathrm{span}(F_{d_2}) + \mathrm{span}([F_{d_1},F_{d_1}])
  \end{equation*}
  and vice versa:
  \begin{equation*}
    F_{d_2}\subset \mathrm{span}(X_{d_2}) + \mathrm{span}([X_{d_1},X_{d_1}]),
  \end{equation*}
  because
  \begin{itemize}
  \item $F_{d_1}=X_{d_1}$, and    
  \item $\overline{X_{d_2}}$ and $\overline{F_{d_2}}$ constitute bases for the same subspace
    \begin{equation}\label{eq:xfd2}
      \mathrm{span}\left(\overline{X_{d_2}}\right) = \mathrm{span}\left(\overline{F_{d_2}}\right)
    \end{equation}
    of $L_{ab}$.
  \end{itemize}
  An automorphism of \Cref{eq:xfd2} that identifies $\overline{F_{d_2}}$ with $\overline{X_{d_2}}$ can thus be lifted to an automorphism of
  \begin{equation*}
    \mathrm{span}(F_{d_1})+\mathrm{span}(F_{d_2}) + \mathrm{span}([F_{d_1},F_{d_1}])
  \end{equation*}
  that
  \begin{itemize}
  \item is the identity on $F_{d_1}=X_{d_1}$,
  \item respects the Lie bracket on that space, 
  \item and maps $F_{d_2}$ to $X_{d_2}$.
  \end{itemize}
  This procedure can similarly be continued recursively to higher degrees: we henceforth assume that $X_{d_i}=F_{d_i}$ for $i=1,2$, etc.
\end{proof}

We observe that homogeneity is crucial to \Cref{pr:allhomog}:

\begin{example}
  Consider the free Lie algebra $L$ on two generators, $x$ and $y$. The elements
  \begin{equation*}
    x':=x+[x,[x,y]]\quad \text{and}\quad y':=y+[x,[x,[x,y]]]
  \end{equation*}
  surject to a basis for $L_{ab}$, but any non-trivial Lie polynomial in $x'$ and $y'$ will have degree at least 3. It follows that $[x,y]$, for instance, cannot lie in the (free) Lie algebra generated by $x'$ and $y'$.
\end{example}



\def\polhk#1{\setbox0=\hbox{#1}{\ooalign{\hidewidth
  \lower1.5ex\hbox{`}\hidewidth\crcr\unhbox0}}}

\Addresses
  
\end{document}